\definecolor{purple}{rgb}{1.0, 0.0, 1.0}
\newcommand{\p}{\partial}
\newcommand{\R}{\mathbb{R}}
\newcommand{\C}{\mathbb{C}}
\newcommand{\D}{\slashed{D}}
\newcommand{\dd}{\mathop{}\!\mathrm{d}}
\newcommand{\dv}{\dd{v}}
\newcommand{\genus}{\gamma}
\newcommand{\cliff}{\mathfrak{m}}
\DeclareMathOperator{\Spin}{Spin}
\DeclareMathOperator{\SO}{SO}
\DeclareMathOperator{\Spec}{Spec}
\DeclareMathOperator{\End}{End}
\DeclareMathOperator{\Id}{Id}
\numberwithin{equation}{section}
\newtheorem{thm}{Theorem}[section]
\newtheorem{lemma}[thm]{Lemma}
\newtheorem{prop}[thm]{Proposition}
\newtheorem{rmk}{Remark}[section]
\title[Super-Liouville equation]{Existence results for a super-Liouville equation \\on compact surfaces}
\author[A. Jevnikar, A. Malchiodi, R. Wu]{Aleks Jevnikar, Andrea Malchiodi, Ruijun Wu}
\address{Scuola Normale Superiore, Piazza dei Cavalieri 7, 56126 Pisa, Italy.}
\email{aleks.jevnikar@sns.it, andrea.malchiodi@sns.it, ruijun.wu@sns.it}
\begin{document}

\begin{abstract}
 We are concerned with a super-Liouville equation on compact surfaces with  genus larger than one, 
   obtaining the first non-trivial existence result for this class of problems via min-max methods.
  In particular we make use of a Nehari manifold and, 
 after showing the validity of the Palais-Smale condition,  
  we exhibit either a mountain pass or linking geometry. 

\end{abstract}

\maketitle

{\footnotesize
\emph{Keywords}: super-Liouville equation, existence results, min-max methods.

\medskip

\emph{2010 MSC}: 58J05, 35A01, 58E05, 81Q60.}

\

\section{introduction}

The Liouville equation in two dimensions, which has the form
\begin{equation}\label{eq:prescribing curvature equation}
 -\Delta u= \widetilde{K} e^{2u}-K,
\end{equation}
for some given functions~$K,\widetilde{K}$   on a  surface~$M$, has been extensively studied and has wide applications in geometry and physics. 
A typical example is the prescription of curvature.
Let~$g$ be a Riemannian metric on a surface~$M$ with Gaussian curvature~$K=K_g$ and let~$\widetilde{K}$ be a given function on~$M$. 
The question is whether there exists a functions~$u \in C^\infty(M)$ such that the conformal metric~$\widetilde{g}=e^{2u}g$ has Gaussian curvature~$\widetilde{K}$, 
see e.g. \cite{changyang1987prescribing, kazdan1975scalar}. 

Since the Gaussian curvature for~$\widetilde{g}$ is given by~$e^{-2u}(K_g-\Delta_g u)$, the problem is equivalent to the solvability of 
equation~\eqref{eq:prescribing curvature equation}. 
Observe that the conformal factor~$u$ within the conformal class of~$[g]$ can be found as a critical point of the following functional:
\begin{equation}
I(u):=\int_M \left( |\nabla u|^2+2K_gu-\widetilde{K}e^{2u} \right) \dv_g.
\end{equation}

When~$M$ is a closed Riemann surface, which is the case we are interested in for this paper, 
the function~$\widetilde{K}$ has to satisfy the Gauss--Bonnet formula with respect to the new metric~$\widetilde{g}$.
When~$\widetilde{K}$ is constant with the sign compatible with the Gauss--Bonnet formula, the equation is always solvable, according to the~\emph{uniformization theorem}. 
For non-constant~$\widetilde{K}$, though not being totally solved, we have a good understanding of the problem in most cases, see e.g.~\cite[Chapter 5]{schoenyau1994lectureOnDG-I} and ~\cite[Chapter 6]{aubin1998somenonlinear}. 

\medskip

More recently, equation~\eqref{eq:prescribing curvature equation} has been studied in the context of hyperelliptic curves and of the Painlev\'e equations, see \cite{chai-lin-wang} and \cite{chen-kuo-lin}, respectively. 

\

Equation~\eqref{eq:prescribing curvature equation} plays also an important role in mathematical physics. On one hand, it arises in Electroweak and Chern-Simons self-dual vortices, see \cite{spruck-yang,tarantello,yang}. On the other hand, it appears in the Liouville field theory with applications to  string theory, see \cite{ops1988extremals,polya1,polya2}. See also \cite{hawking} for a recent connection between \eqref{eq:prescribing curvature equation} and the Hawking mass.
 
\

Motivated by the supersymmetric extension of the Liouville theory, the authors in~\cite{jost2007superLiouville} 
introduced the following so-called \emph{super-Liouville functional}:
\begin{equation}
 \widetilde I(u,\psi):=\int_M \biggr( |\nabla u|^2+ 2K_g u-e^{2u} +2\left<(\D+e^u)\psi,\psi\right> \biggr) \dv_g,  
\end{equation}
where $\D$ is the Dirac operator acting on spinors $\psi$, see Subsection~\ref{subsec:spinor} for precise definitions. In a series of works they  performed blow-up analysis and studied the compactness of the solution spaces under weak assumptions and in various setting; see e.g.~\cite{jost2007superLiouville, jost2009energy,jost2014qualitative,jost2015LocalEstimate} and the references therein. For the role of the super-Liouville equations in physics we refer to \cite{super1,super2,super3}. One should note that the sign conventions adopted above are adapted to the sphere case. 

In this paper we consider the problem posed on a closed Riemann surface of genus~$\genus> 1$. 
In this case the coefficients in the action functional need to be adapted to the Gauss--Bonnet formula. 
Let~$g$ be a Riemannian metric compatible with the given complex structure.
We are going to consider the following functional: 
\begin{equation}\label{eq:super-Liouville functional}
 J_\rho(u,\psi)\coloneqq \int_M \biggr( |\nabla u|^2+2K_gu+e^{2u}
            +2\left<(\D-\rho e^u)\psi,\psi\right> \biggr) \dv_g,
\end{equation}
where the parameter~$\rho$ is a positive constant. 
We are adopting a different notation from that in~\cite{jost2007superLiouville}, making  our choice  compatible with  equation~\eqref{eq:prescribing curvature equation}.
The Euler--Lagrange equation for~$J_\rho$ is 
\begin{equation}\label{eq:EL for super-Liouville}
\tag{EL}
 \begin{cases}
  \Delta_g u{}=&{}e^{2u}+K_g-\rho e^u|\psi|^2, \vspace{0.2cm}\\
  \D_g\psi{}=&{}\rho e^u\psi,
 \end{cases}
\end{equation}
which takes the name of \emph{super-Liouville equations}. The system~\eqref{eq:EL for super-Liouville} clearly admits the {\em trivial solution} $(u_*,0)$, where~$u_*$ satisfies 
\begin{equation}
 -\Delta u_*= -e^{2u_*}-K_g
\end{equation}
and whose existence is given by the uniformization theorem.  
This is also a~\emph{ground state solution}  in the sense that it has minimal critical level: this follows from the fact that  the spinorial part does not affect the critical levels, while the scalar component of the functional is coercive and convex. The latter properties 
also yield uniqueness of such a trivial solution. 
The aim of the present paper is to find a solution with non-zero spinor part, a so-called \emph{non-trivial solution}. 

\

\textbf{Conformal symmetry and reduction to uniformized case.}
System~\eqref{eq:EL for super-Liouville} admits a conformal symmetry in the following sense. 
Suppose that~$(u,\psi)$ is a solution of~\eqref{eq:EL for super-Liouville}, 
let~$v\in C^\infty(M)$ and consider the metric~$\widetilde{g}\coloneqq e^{2v}g$.  
There exists an isometric isomorphism~$\beta\colon S_g\to \widetilde{S}_{\widetilde{g}}$ of the spinor bundles corresponding to different metrics such that
\begin{equation}\label{eq:D-conf}
 \widetilde{\D}_{\widetilde{g}}\left(e^{-\frac{v}{2}}\beta(\psi)\right)
 =e^{-\frac{3}{2}v}\beta(\D_g\psi),
\end{equation}
see e.g.~\cite{ginoux2009dirac, hitchin1974harmonicspinors}, where we are using the notation from~\cite{jost2018symmetries}.
Thus the pair
$$
 \begin{cases}
  \widetilde{u}=u-v, \vspace{0.2cm}\\
  \widetilde{\psi}= e^{-\frac{u}{2}}\beta(\psi),
 \end{cases}
$$
solves the system 
\begin{align}
 \Delta_{\widetilde{g}}\widetilde{u} 
 ={}&e^{-2v}\Delta_g(u-v)= e^{-2v}(e^{2u}+K_g-\rho e^{u}|\psi|^2-\Delta_g v) \\
 ={}&e^{2(u-v)}+e^{-2v}(K_g-\Delta_g v)-\rho e^{u-v}|e^{-\frac{v}{2}} \beta(\psi)|^2 \\
 ={}&e^{2\widetilde{u}}+{K}_{\widetilde{g}}
 -\rho e^{\widetilde{u}}|\widetilde{\psi}|^2, \\ 
\widetilde{\D}_{\widetilde{g}} \widetilde{\psi}
 ={}&\rho e^{-\frac{3}{2}v}\beta(e^u\psi)
     =\rho e^{u-v}\left(e^{-\frac{1}{2}v}\beta(\psi)\right)
     =\rho e^{\widetilde{u}}\widetilde{\psi},  
\end{align}
analogous to \eqref{eq:EL for super-Liouville}. 
Therefore, we can work with a convenient  background metric inside the given conformal class.
W.l.o.g., recalling that the genus is larger than one, we assume that the background metric~$g_0$ is uniformized, meaning that~$K_{g_0}\equiv -1$: notice that 
such a metric is unique. 
In this case the trivial solution is given by~$(0,0)$: 
the main result of the paper is the existence of a non-trivial min-max solution obtained via a variational approach. 

\begin{thm} \label{thm}
 Let~$M$ be a closed Riemann surface of genus~$\genus>1$ with Riemannian metric~$g$.
 Let~$g_0\in [g]$ be a conformal uniformized metric, i.e.~$K_{g_0}\equiv -1$, and suppose that the spin structure is chosen so that~$0\notin  \Spec(\D_{g_0})$.
 Then for any~$\rho\notin\Spec(\D_{g_0})$, there exists a non-trivial solution to~\eqref{eq:EL for super-Liouville}.   
\end{thm}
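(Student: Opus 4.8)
The plan is to eliminate the scalar field by a partial minimization, turning the problem into a strongly indefinite variational problem for the spinor alone, and then to run a generalized min--max scheme; establishing the Palais--Smale condition will be the crux. Working with the uniformized metric $g_0$, the functional $J_\rho$ is smooth on $\mathcal{H}=H^1(M)\times H^{1/2}(S_{g_0})$, its critical points are weak and hence (by elliptic bootstrap) smooth solutions of~\eqref{eq:EL for super-Liouville}, the trivial solution is $(0,0)$, and $J_\rho(0,0)=|M|:=\mathrm{Area}(M,g_0)$. For fixed $\psi$ one has $J_\rho(u,\psi)=2F_\psi(u)+2\langle\D_{g_0}\psi,\psi\rangle$ with $F_\psi(u)=\tfrac12\int_M(|\nabla u|^2+e^{2u})-\int_M u-\rho\int_M e^u|\psi|^2$ (using $K_{g_0}\equiv-1$). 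Since $\tfrac14\int e^{2u}-\rho\int e^u|\psi|^2\ge-\rho^2\int|\psi|^4$ and $\int e^{2u}\ge|M|e^{2\bar u}$, the functional $F_\psi$ is coercive, weakly lower semicontinuous and bounded below, so it has a minimizer $u_\psi$ solving $\Delta u_\psi=e^{2u_\psi}-1-\rho e^{u_\psi}|\psi|^2$, with $u_\psi$ bounded in $C^{1,\alpha}$ uniformly on bounded sets of $\psi$. After checking uniqueness and non-degeneracy of $u_\psi$ (immediate for $\psi$ small; in general via a Ladyzhenskaya-type estimate together with the $C^{1,\alpha}$ bound), or else carrying this partial minimization out directly inside the deformation arguments, one obtains a $C^1$ reduced functional $\mathcal{J}(\psi):=J_\rho(u_\psi,\psi)$ with $\mathcal{J}'(\psi)=4(\D_{g_0}-\rho e^{u_\psi})\psi$, whose critical points are exactly the solutions of~\eqref{eq:EL for super-Liouville}.

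Next I would record the geometry of $\mathcal{J}$. Since $u_\psi$ minimizes $F_\psi$,
\begin{equation}\label{eq:key-ineq}
 \mathcal{J}(\psi)=\min_u J_\rho(u,\psi)\le J_\rho(0,\psi)=|M|+2\langle(\D_{g_0}-\rho)\psi,\psi\rangle,
\end{equation}
and a Taylor expansion gives $\mathcal{J}(\psi)=|M|+2\langle(\D_{g_0}-\rho)\psi,\psi\rangle+O(\|\psi\|_{H^{1/2}}^4)$ near $0$. With $H^{1/2}(S_{g_0})=H^+\oplus H^-$ the spectral decomposition of $\D_{g_0}-\rho$ --- both summands infinite dimensional, with spectral gaps thanks to $0,\rho\notin\Spec(\D_{g_0})$ --- it follows from~\eqref{eq:key-ineq} that $\mathcal{J}\le|M|$ on all of $H^-$, while $\mathcal{J}\ge|M|+\delta$ on a small sphere $\{\psi^+\in H^+:\|\psi^+\|_{H^{1/2}}=r\}$. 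Moreover, testing $F_\psi$ with constant functions shows $\mathcal{J}$ is superquadratic along rays ($2F_\psi(u_\psi)\le C-c\|\psi\|_{L^2}^4$ for $\|\psi\|_{L^2}$ large), so $\mathcal{J}(t\psi)\to-\infty$ as $t\to+\infty$ and $\mathcal{J}\le|M|$ on the boundary of a suitable bounded disk built over $H^-\oplus\R\psi_0$ with $\psi_0\in H^+$. Finally, on the Nehari-type set $\mathcal{N}=\{\psi\ne0:\mathcal{J}'(\psi)[\psi]=0\}$ a short computation gives $\mathcal{J}(\psi)=\mathcal{J}(\psi)-\tfrac12\mathcal{J}'(\psi)[\psi]=I(u_\psi)\ge|M|$, where $I$ denotes the (strictly convex, coercive) scalar functional; this lower bound is what forces the min--max level to sit strictly above the trivial level $|M|$.

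Since $\mathcal{J}$ is strongly indefinite I would apply a Galerkin/$\tau$-topology critical point theorem for strongly indefinite functionals (in the spirit of Benci--Rabinowitz and Kryszewski--Szulkin), truncating $H^-$ to finite dimensional subspaces, to produce a min--max level $c_\rho\in(|M|,+\infty)$; according to the position of $\rho$ relative to $\Spec(\D_{g_0})$ --- in particular to whether $\D_{g_0}$ has eigenvalues in $(0,\rho)$, which add finitely many negative directions --- this is realized either by a mountain-pass or by a higher-dimensional linking scheme, which is the alternative in the statement. The main obstacle is the Palais--Smale condition at level $c_\rho$: for a sequence $\psi_n$ with $\mathcal{J}(\psi_n)\to c_\rho$ and $\mathcal{J}'(\psi_n)\to0$, the identity $\mathcal{J}(\psi_n)-\tfrac12\mathcal{J}'(\psi_n)[\psi_n]=I(u_{\psi_n})$ and the coercivity of $I$ control $u_{\psi_n}$ in $H^1$ in terms of $\|\psi_n\|_{H^{1/2}}$; then $\D_{g_0}\psi_n=\rho e^{u_{\psi_n}}\psi_n+o(1)$ together with $\D_{g_0}\colon H^{1/2}\to H^{-1/2}$ being an isomorphism (here $0\notin\Spec(\D_{g_0})$ enters) and the compactness of $L^q\hookrightarrow H^{-1/2}$ upgrade weak convergence to strong once $\|\psi_n\|_{H^{1/2}}$ is known to be bounded. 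Proving that bound --- excluding blow-up by combining the energy identities with the $L^p$ and pointwise control of $u_{\psi_n}$ coming from its elliptic equation, and using $\rho\notin\Spec(\D_{g_0})$ to rule out the resonant limit $(\D_{g_0}-\rho)\phi=0$ --- is where most of the technical effort goes. Once (PS)$_{c_\rho}$ and the geometry are in hand, the abstract theorem yields a critical point $\psi_\rho$ of $\mathcal{J}$, i.e.\ a smooth solution $(u_{\psi_\rho},\psi_\rho)$ of~\eqref{eq:EL for super-Liouville} of energy $c_\rho>|M|$; since the only solution with vanishing spinor is the trivial one, of energy $|M|$, we must have $\psi_\rho\not\equiv0$, giving the sought non-trivial solution.
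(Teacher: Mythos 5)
Your strategy---eliminate $u$ by partial minimization to get a reduced functional $\mathcal{J}(\psi)=J_\rho(u_\psi,\psi)$, then apply abstract strongly indefinite critical point theory---is genuinely different from the paper's, which keeps both variables and instead constrains $\psi$ to a generalized Nehari manifold $N=\{G(u,\psi)=0\}$ with $G(u,\psi)=P^-\bigl[(1+|\D|)^{-1}(\D\psi-\rho e^u\psi)\bigr]$, shows $N$ is a natural constraint via a Lagrange multiplier argument, and then runs a standard mountain pass (for $0<\rho<\lambda_1$) or a finite-dimensional linking (for $\rho\in(\lambda_k,\lambda_{k+1})$) on $N$. However, your route has a gap at its very first step: the map $\psi\mapsto u_\psi$ is not obviously well defined or $C^1$. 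The fibered functional $F_\psi(u)$ is coercive, but it is \emph{not} convex in $u$ once $\psi$ is large: the pointwise integrand $\tfrac12 e^{2u}-\rho e^{u}|\psi|^2$ has second $u$-derivative $e^u(2e^u-\rho|\psi|^2)$, which is negative wherever $2e^u<\rho|\psi|^2$. So uniqueness and non-degeneracy of the minimizer---which you need for $\mathcal{J}$ to be single-valued and for the envelope formula $\mathcal{J}'(\psi)=4(\D-\rho e^{u_\psi})\psi$ to hold---are only clear for small $\psi$; your parenthetical ``Ladyzhenskaya-type estimate'' is not an argument, and ``carrying the minimization inside the deformation argument'' would require a substantial selection/continuity analysis that you have not supplied. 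Everything downstream (the expansion of $\mathcal{J}$ near $0$, the Nehari identity $\mathcal{J}(\psi)-\tfrac12\mathcal{J}'(\psi)[\psi]=F(u_\psi)\ge 4\pi(\genus-1)$, the (PS) analysis) depends on this.

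The second gap is the one you yourself flag as ``where most of the technical effort goes'': the a priori $H^{1/2}$ bound on Palais--Smale sequences. You give no argument for it, and in the unconstrained reduced setting it is not routine, since $\langle\D\psi_n,\psi_n\rangle$ has no sign. The paper obtains exactly this bound by exploiting the constraint: on $N$ one has $\int_M\langle\D\psi_n-\rho e^{u_n}\psi_n,h\rangle\dv_g=0$ for all $h\in H^{\frac12,-}$, which kills the cross terms when the approximate spinor equation is tested against $\psi_n^+$ and $\psi_n^-$ separately and yields $\|\psi_n\|^2\le C(1+c+o(\|\psi_n\|))^{3/4}\|\psi_n\|+o(\|\psi_n\|)$, hence boundedness; you would need a substitute for this mechanism. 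Relatedly, invoking a Benci--Rabinowitz or Kryszewski--Szulkin theorem requires verifying its structural hypotheses (e.g.\ $\tau$-upper semicontinuity of $\mathcal{J}$ and weak sequential continuity of $\mathcal{J}'$), which is nontrivial for an exponential nonlinearity composed with the implicit map $\psi\mapsto u_\psi$; the paper's Nehari reduction avoids the abstract machinery altogether, since after the constraint only the finitely many directions with $0<\lambda_j<\rho$ remain ``bad'' and classical linking applies. So the proposal is a reasonable alternative blueprint, but as written it does not constitute a proof.
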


We stress that this is the first non-trivial existence result for this class of problems. Moreover, observe that by \eqref{eq:D-conf} ~$\dim\ker(\D_g)$ is a conformal invariant, and the condition~$0\notin \Spec(\D_{[g]})$ is valid for many spin structures and conformal structures, as it will be explained later. 

\begin{rmk}
 Note that the spinor bundle~$S\to M$ admits global automorphisms, e.g. the quaternionic structures, which form a group.
 These are parallel with respect to~$\nabla^s$ and commute with the Clifford multiplications by tangent vectors, see~\cite[Sect. 2]{jost2016regularity}.
 The functional~$J_\rho$ is thus invariant under the actions of such isometries. 
It follows that there exist more than one non-trivial solution (at least eight, which is the cardinality of  the quaternion group). 
Given a solution ~$(U,\Psi)$, an intuitive example is the antipodal solution~$(U,-\Psi)$, which is in the orbit of the quaternionic structure group actions.  
\end{rmk}

Concerning the case of genus one, i.e. when the base surface is a torus, the problem might not be well-defined. 
Indeed, if we take~$\widetilde{K}$ and~$K$ to be zero, then the system~\eqref{eq:EL for super-Liouville} has only trivial solutions of the form~$(a,0)$ where~$a\in\R$. 
Meanwhile in the sphere case, where both~$\widetilde{K}$ and~$K$ should be~$1$, the functional turns out to be even more strongly indefinite, and   admits 
neither the classical mountain pass nor the linking geometry. 
In the genus-one case it might be interesting to consider the case of changing-sign $\tilde{K}$, as it was done in 
\cite{kazdan1975scalar} for the prescribed Gaussian curvature problem. 

\medskip

The main difficulty in studying \eqref{eq:EL for super-Liouville}  is that the Dirac operator is strongly indefinite: the spectrum of~$\D$ is real and symmetric with respect to the origin. 
The classical theory for variational problems involving Laplacians or Schr\"odinger operators, where the positive parts usually dominates the behavior of the functional, fails to work for Dirac type actions.
There were methods developed for general strongly indefinite variational problems, see e.g.~\cite{benci1982oncritical, benci1979critical, hulshof1993differential}, but they are not directly applicable to Dirac operators.
Dirac operators usually relates more closely to the geometry and topology of the spin manifolds.
Recently several attempts have been made to attack such problems.
With suitable nonlinearities as perturbation adding to the geometric equations, T. Isobe made remarkable progress in adapting the classical theory of calculus of variations to the Dirac  setting~\cite{isobe2010existence,isobe2011nonlinear, isobe2019onthemultiple}.    
Combined with the methods of Robinowitz-Floer homology, A. Maalaoui and V. Martino  also obtained existence results of some nonlinear Dirac type equations, see~\cite{maalaoui2013rabinowitz,maalaoui2015therabinowitz,maalaoui2017characterization} and the references therein.
In the case of super-Liouville equations we have to deal with an exponential nonlinearity, which does not fit in the above settings. 
Moreover, we are directly facing a geometric problem without auxiliary nonlinear perturbations, which is usually harder to deal with. 

\

The article is organized in the following way.
In the second section we introduce some preliminaries in spin geometry and discuss existence of harmonic 
spinors depending on the genus and on the conformal class. We also introduce suitable Sobolev spaces to work 
with and the Moser-Trudinger inequality. 
In the third section we tackle the strong-indefiniteness of the functional by building a natural constraint which defines a generalized Nehari manifold~$N$. 
We then verify the Palais--Smale condition for~$J_\rho|_N$ by showing first some a-priori bounds and then 
proving strong subsequential convergence. 
For suitable $\rho$ we finally show either mountain pass or linking geometry on the Nehari manifold which yield
the existence of a min-max critical point for~$J_\rho$: the 
details of this construction are given in the last section. 

\

\noindent {\bf Acknowledgments.}
A.M. has been partially supported by the projects {\em Geometric Variational Problems} and {\em Finanziamento a supporto della ricerca di base} from Scuola Normale Superiore. 
A.J. and A.M. has been partially supported by MIUR Bando PRIN 2015 2015KB9WPT$_{001}$. They are also members of GNAMPA as part of INdAM.
A.J. and R.W. are supported by the  Centro  di  Ricerca  Matematica  ‘Ennio  de  Giorgi’.

\

\section{Preliminaries}

We will assume some background in spin geometry and Sobolev spaces.
For detailed material one can refer to~\cite{ammann2003habilitation, ginoux2009dirac, gilbarg2001elliptic, lawson1989spin}. 

\

\subsection{Spinor bundles and Dirac operator} \label{subsec:spinor}
Here we introduce our setting and fix the notation.  
Let~$M$ be a closed Riemann surface with a fixed conformal structure and of genus~$\genus$.  
Let~$g$ be a Riemannian metric in the given conformal class and denote the Gaussian curvature by~$K_g$. 
The orthonormal frame bundle~$P_{\SO}(M,g)\to M$ is then a principal~$\SO(2)$ bundle.
Let~$\Spin(2)= U(1)\to \SO(2)$ be the two-fold covering of the circle. 
A \emph{spin structure} is given by a principal~$\Spin(2)$ bundle~$P_{\Spin}(M,g)\to M$ together with an equivariant two-fold covering
\begin{equation}
 P_{\Spin}(M,g)\to P_{\SO}(M,g).
\end{equation}
In dimension two such double coverings always exist; moreover they are in one-to-one correspondence with the elements in~$H^1(M;\mathbb{Z}_2)$, see e.g. \cite[Chapter 2]{lawson1989spin}.
This cohomology group has cardinality~$2^{2\genus}$. 

Let~$S\equiv S_g\to M$ be the associated spinor bundle with a real Riemannian structure~$g^s$ and induced spin connection~$\nabla^s$:
sections of~$S$ are called~\emph{spinors}.
Recall that the \emph{Dirac operator}~$\D$ acting on spinors is defined as the composition of the following chain
\begin{equation}
 \Gamma(S)\xrightarrow{\nabla^s} \Gamma(T^*M\otimes S)\xrightarrow{\cong} \Gamma(TM\otimes S)\xrightarrow{\cliff}\Gamma(S),
\end{equation}
where the second isometric isomorphism is given by the identification via the metric~$g$,  the third arrow~$\cliff$ denotes the Clifford multiplication, and the~$\End(S)$-valued map~$\cliff\colon TM\to \End(S)$ satisfies the following Clifford relation:
\begin{equation}
 \cliff(X)\cliff(Y)+\cliff(Y)\cliff(X)=-2g(X,Y), \qquad \forall X,Y\in\Gamma(TM).
\end{equation}
Later, for simplicity, we will write~$X\cdot\psi$ for~$\cliff(X)\psi$, where~$X\in\Gamma(TM)$ and~$\psi\in\Gamma(S)$.
In terms of a local orthonormal frame~$(e_i)_{i=1,2}$ we then have the {\em Dirac operator}
\begin{equation}
 \D\psi=\sum_{i}\cliff(e_i)\nabla^s_{e_i}\psi,\quad \forall \psi\in \Gamma(S).
\end{equation}
This is a self-adjoint elliptic operator of first order, and it has a finite-dimensional kernel consisting of \emph{harmonic spinors}.
The dimension of the space of harmonic spinors is a conformal invariant, but it depends on the choice of spin structures and the conformal structures in general.
The Bochner-Lichnerowicz formula
\begin{equation}
 \D^2=(\nabla^s)^* \nabla^s+\frac{Scal}{4}
\end{equation}
implies that there is no non-trivial harmonic spinor if~$Scal\ge0$ and~$Scal\not\equiv 0$. 
In particular, there is no harmonic spinor on the 2-sphere with arbitrary metric (since there is only one conformal structure on the 2-sphere).
However, when the genus~$\genus$ is greater than or equal to $1$, there might exist non-trivial harmonic spinors for some choice of spin structures. 
The dimensions of the spaces of harmonic spinors have been computed in literature e.g.~\cite{hitchin1974harmonicspinors, bar1992harmonic, bores1994harmonic}. 
We summarize some facts here to have a picture of the different cases.

\

\subsection{Examples of Riemann surfaces with no non-trivial harmonic spinors}
Here we give some examples of Riemann surfaces having negative Euler characteristic~$2\pi\chi(M)=4\pi(1-\genus)<0$ but admitting no non-trivial harmonic spinors. 

Any element~$\alpha\in H^1(M,\mathbb{Z}_2)$ determines a spin structure~$\xi(\alpha)$, as well as a holomorphic line bundle~$\mathscr{L}_\alpha$ such that~$\mathscr{L}_\alpha\otimes_\C \mathscr{L}_\alpha= \mathscr{K}_M$, where~$\mathscr{K}_M$ denotes the canonical line bundle of~$M$, see e.g. \cite{hitchin1974harmonicspinors, lawson1989spin}.
Denote by~$\mathscr{O}(\mathscr{L_\alpha})$ the sheaf of germs of holomorphic sections of the holomorphic line bundle~$\mathscr{L}_\alpha$, and set~$h_{\alpha,g}^0 =\dim H^0(M, \mathscr{O}(\mathscr{L}_\alpha))$. 
If the associated spinor bundle~$S\equiv S(\alpha,g)$ admits a space of harmonic spinors of dimension~$h_{\xi(\alpha),g}$, then
\begin{equation}
 h_{\xi(\alpha),g}=2h^0_{\alpha,g}. 
\end{equation}
It is known that, for a Riemann surface~$M$ of genus~$\genus$, there are precisely~$2^{\genus-1}(2^\genus+1)$ spin structures~$\alpha$ on~$M$ for which ~$h^0_{\alpha,g}$ is an even number (such spin structures are called \emph{even spin structures} on~$M$), and for the other~$2^{\genus-1}(2^{\genus}-1)$ spin structures the number~$h^0_{\alpha,g}$ is odd (\emph{odd spin structures}). 
 
For~$\genus=1$ ~$M$ is topologically a torus, and for any conformal structure~$[g]$ we have four spin structures: three even spin structures with no non-trivial harmonic spinors and one odd spin structure (the trivial one~$\alpha= 0$) with one-dimensional space of positive harmonic spinors (hence~$h_{\xi(0);g}=2$). 

For~$\genus=2$ the description is similar, namely for any conformal structure~$[g]$ there are ten even spin structures with no non-trivial harmonic spinors and six odd spin structures with one-dimensional space of positive harmonic spinors (hence~$h_{\xi(0)}=2$).

These are the known cases where the dimension of~$\ker(D)$ is independent of the choice of metric~$g$ (i.e. the choice of the Riemann surface structure on~$M$). 
When the genera become larger, the dimension of the kernels generally depends on the conformal class.
Even in this case we still have  many examples where there are no non-trivial harmonic spinors.

Recall that a hyperelliptic Riemann surface is a complex projective curve admitting a rational surjective map onto~$\C P^1$ which is 2-to-1 up to a finite set of branching points.
All Riemann surfaces of genera~$\genus\le 2$ are hyperelliptic, while there exist non-hyperelliptic surfaces of all genera~$\genus\ge 3$. 

For the hyperelliptic case, C. B\"ar~\cite{bar1992harmonic} showed that the spin structures correspond one-to-one to the pairwise inequivalent square roots of the canonical divisor, and in terms of a suitably defined \emph{weight} of the divisors, he also clarified the dimensions~$h^0$ of the kernels: 
\begin{enumerate}
 \item if~$M$ is hyperelliptic with~$\genus=2k+1$, 
       \begin{itemize}
        \item there is exactly one spin structure of weight~$\genus-1$ and in this case~$h^0=\frac{\genus+1}{2}=k+1$;
        \item for~$w=1,3,5,\cdots,\genus-2$, there are exactly~$\binom{2\genus+2}{\genus-w}$ spin structures of weight~$w$ and in this case~$h^0=\frac{w+1}{2}$;
        \item there are exactly~$\binom{2\genus+1}{\genus}$ spin structures of weight~$-1$ and in this case~$h^0=0$;
       \end{itemize}
  \item if~$M$ is hyperelliptic with~$\genus=2k$,
        \begin{itemize}
         \item there is exactly~$2\genus+2$ spin structure of weight~$\genus-1$ and in this case~$h^0=[\frac{\genus+1}{2}]=k$;
        \item for~$w=1,3,5,\cdots,\genus-1$, there are exactly~$\binom{2\genus+2}{\genus-w}$ spin structures of weight~$w$ and in this case~$h^0=\frac{w+1}{2}$;
        \item there are exactly~$\binom{2\genus+1}{\genus}$ spin structures of weight~$-1$ and in this case~$h^0=0$;
        \end{itemize}

\end{enumerate}
For  non-hyperelliptic surfaces, there are also known examples where the dimensions of kernels are computed. 

For a genus~$\genus=3$ non-hyperelliptic surface, among the~$2^{2\genus}=64$ spin structures there are~$28$ odd ones with~$h^0=1$ and~$36$ even ones with~$h^0=0$. 

The case for~$\genus=4$ non-hyperelliptic surfaces is different: there are in total~$2^{2\genus}= 256$ spin structures, 120 of them are odd with~$h^0=1$, and for the other~$136$ even spin structures, one of the followings may happen:
\begin{enumerate}
 \item[(I)] there exists a unique even spin structure with~$h^0=2$, while the other~$135$ even spin structures have~$h^0=0$;
 \item[(II)] all the 136 spin structures have~$h^0=0$.
\end{enumerate}
A non-hyperelliptic Riemann surface is called of type~(I) or~(II) if  it satisfies the corresponding above conditions. 
Both classes are non-empty. 

\

\subsection{Sobolev spaces for spinors}

The spinor bundle~$S=S_g$ has a Riemannian structure~$g^s$ and a spin connection~$\nabla^s$ induced from the Levi-Civita connection.
Then we can define the usual Sobolev spaces with integer differentiability, namely ~$W^{k,p}(S)$ consists of the spinors whose~$k$-th covariant derivatives are in~$L^p$ for~$k\in \mathbb{N}$ and~$p\in [1,+\infty]$ and~$W^{-k,q}(S)\coloneqq (W^{k,p}(S))^*$ where~$q$ is the H\"older conjugate of~$p$. 
Here we will consider also fractional Sobolev exponents, see the discussion in the sequel.  

Recall that~$\D=\D_g$ is a first order elliptic operator which is essentially self-adjoint. 
The spectrum~$\Spec(\D)$ is discrete and consists of real eigenvalues,~$\Spec_0(\D)\cup\{\lambda_k\}_{k\in\mathbb{Z}\backslash\{0\}}$, where~$\Spec_0(\D)$ stands for the  zero element in the spectrum (or the empty set) while the lambda's are the non-zero eigenvalues, indexed by~$\mathbb{Z}_*\equiv \mathbb{Z}\backslash\{0\}$ in an increasing order (in absolute value) and counted with multiplicities:
\begin{equation}
 -\infty \leftarrow\cdots\le \lambda_{-l-1}\le \lambda_{-l}\le\cdots\le \lambda_{-1}\le 0 
 \le \lambda_1\le \cdots \le \lambda_k \le \lambda_{k+1}\le \cdots \to +\infty.
\end{equation}
Moreover, the spectrum is symmetric with respect the the origin when~$\dim M=2$. 
Let~$\varphi_k$ be the eigenspinor corresponding to~$\lambda_k$, ~$k\in\mathbb{Z}_*$ with~$\|\varphi_k\|_{L^2(M)}=1$, and let ~$\varphi_{_{0,j}}$, ~$1\le j\le h^0$, be an orthonormal basis of~$\ker(\D)$. 
These together form a complete orthonormal basis of~$L^2(S)$: 
 any spinor~$\psi\in\Gamma(S)$ can be  expressed in terms of this basis as 
\begin{equation}\label{eq:spinor in basis}
 \psi=\sum_{k\in \mathbb{Z}_*}a_k\varphi_k
   +\sum_{1\le j\le h^0} a_{0,j}\varphi_{_{0,j}},  
\end{equation}
and the Dirac operator acts as
\begin{equation}
 \D\psi= \sum_{k\in\mathbb{Z}_*} \lambda_k a_k \varphi_k.   
\end{equation}
For any~$s>0$, the operator~$|\D|^s\colon \Gamma(S)\to \Gamma(S)$ is defined as 
\begin{equation}
 |\D|^s\psi =\sum_{k\in\mathbb{Z}_*} |\lambda_k|^s a_k\varphi_k,
\end{equation}
provided that the right-hand side belongs to~$L^2(S)$. The domain of~$|\D|^s$ is 
\begin{equation}
 H^s(S)\coloneqq \left\{\psi\in L^2(S)\mid \int_M\left<|\D|^s\psi,|\D|^s\psi\right>\dv_g <\infty  \right\}, 
\end{equation}
which is a Hilbert space with inner product
\begin{equation}
 \left<\psi,\phi\right>_{H^s} 
 = \left<\psi,\phi\right>_{L^2}
 +\left<|\D|^s\psi, |\D|^s\phi\right>_{L^2}.
\end{equation}
For~$s=k\in\mathbb{N}$,~$H^k(S)=W^{k,2}(S)$ and the above norm is equivalent to the Sobolev~$W^{k,2}$ norm.
For~$s<0$,~$H^{s}(S)$ is by definition the dual space of~$H^{-s}(S)$. 

Since~$S$ has finite rank, the general theory for Sobolev's embedding on closed manifold continues to hold here.
In particular, for~$0<s<1$ and~$q\le \frac{2}{1-s}$, we have the continuous embeddings 
\begin{equation}
 H^s(S) \hookrightarrow L^{q}(S). 
\end{equation}
Furthermore, for~$q<\frac{2}{1-s}$ the embedding is compact, 
 see e.g. \cite{ammann2003habilitation} for more details. 

We will mainly be interested in the case~$s=\frac{1}{2}$, for which~$\frac{2}{1-s}=4$.
This is the largest space on which the Dirac action of the form
\begin{equation}
 \psi\mapsto \int_M \left<\D\psi,\psi\right>\dv_g
\end{equation}
is well-defined. 
Note that for~$\psi\in H^{\frac{1}{2}}(S)$ we have~$\D\psi\in H^{-\frac{1}{2}}(S)$, which is defined in the distributional sense. 
Thus we can define the duality pairing
\begin{equation}
 \left<\D\psi,\psi\right>_{H^{-\frac{1}{2}}\times H^{\frac{1}{2}}} \in\R.
\end{equation}
On the other hand, by the expression~\eqref{eq:spinor in basis} we see that the function
\begin{equation}
 g^s_x \left(\D\psi(x),\psi(x)\right)
\end{equation}
is in~$L^1(M)$, whose integral is exactly given by~$\sum_{k\in\mathbb{Z}_*} \lambda_k a_k^2 <\infty$.
By this we validate the Dirac action in the equivalent form
\begin{equation}
 \left<\D\psi,\psi\right>_{H^{-\frac{1}{2}}\times H^{\frac{1}{2}}}
 =\int_M \left<\D\psi(x),\psi(x)\right>_{g^s(x)}\dv_g(x). 
\end{equation}

Suppose~$h^0=0$, i.e. there are no non-trivial harmonic spinors.
Then the Dirac operator~$\D$ is invertible.  
Splitting into the positive and negative parts of the spectrum~$\Spec(\D)$, we have the decomposition 
\begin{equation}\label{eq:split}
 H^{\frac{1}{2}}(S)= H^{\frac{1}{2},+}(S)\oplus H^{\frac{1}{2},-}(S).
\end{equation}
Let~$\psi=\psi^+ +\psi^-$ be decomposed accordingly: then, 
\begin{align}
 \int_M\left<\D\psi^+,\psi^+\right>\dv_g 
 =\int_M\left<|\D|^{\frac{1}{2}}\psi^+,|\D|^{\frac{1}{2}}\psi^+\right>\dv_g \ge \lambda_1(\D_g)\|\psi^+\|_{L^2(M)}^2, 
\end{align}
where~$\lambda_1$ is the first positive eigenvalue of~$\D=\D_g$.
Hence
\begin{align}
 \|\psi^+\|^2_{H^{\frac{1}{2}}}
 =&\|\psi^+\|^2_{L^2}+\||\D|^{\frac{1}{2}}\psi^+\|_{L^2}^2 \\  
 \le&(\lambda_1(\D_g)+1)\||\D|^{\frac{1}{2}}\psi^+\|^2_{L^2}
 \le(\lambda_1(\D_g)+1)\|\psi^+\|^2_{H^{\frac{1}{2}}}. 
\end{align}
That is, for a given~$g$, the integral~$\int_M \left<\D\psi^+,\psi^+\right>\dv_g$ defines a norm on~$H^{\frac{1}{2},+}(S)$ equivalent to the Hilbert's. 
Similarly, on~$H^{\frac{1}{2},-}(S)$ there is an equivalent norm given by
\begin{equation}
 -\int_M \left<\D\psi^-,\psi^-\right>\dv_g
 =\||\D|^{\frac{1}{2}}\psi^-\|^2_{L^2}.
\end{equation}
Consequently,
\begin{equation}
 \int_M \left[ \left<\D\psi^+,\psi^+\right>-\left<\D\psi^-,\psi^-\right> \right] \dv_g
 =\||\D|^{\frac{1}{2}}\psi^+\|^2_{L^2}
 +\||\D|^{\frac{1}{2}}\psi^-\|^2_{L^2}
\end{equation}
defines a norm equivalent to the~$H^{\frac{1}{2}}$-norm. 

\

\subsection{Moser--Trudinger embedding}
Another space we use frequently is~$H^1(M)=W^{1,2}(M,\R)$. 
Consider the subspace in $H^1(M)$ of the functions with zero average 
\begin{equation}
 H^1_0(M)\coloneqq \left\{u\in H^1(M)\mid \int_M u\dv_g=0\right\}.
\end{equation}
Then~$H^1(M)= \R\oplus H^1_0(M)$, and any $u \in H^1(M)$ can be written as~$u=\bar{u}+\widehat{u}$ where~$\bar{u}=\fint_M u\dv_g $ denotes the average of~$u$. By Poincar\'e's inequality,~$\|\nabla \widehat{u}\|_{L^2}$ defines a norm equivalent to~$\|\widehat{u}\|_{H^1}$ on~$H^1_0(M)$, and 
\begin{equation}
 |\bar{u}|+\|\nabla\widehat{u}\|_{L^2}
\end{equation}
 a norm equivalent to~$\|u\|_{H^1}$. 
The Sobolev embedding theorems imply that for any~$p<\infty$, ~$H^1(M)$ embeds into~$L^p(M)$ continuously and compactly. 
Furthermore, the Moser--Trudinger inequality states that there exists~$C>0$ such that 
\begin{equation}
 \int_M \exp\left(\frac{4\pi|\widehat{u}|^2}{\|\nabla\widehat{u}\|^2_{L^2(M)}}\right)\dv_g\le C. 
\end{equation}
As a consequence 
\begin{equation}
 8\pi\log\int_M e^{\widehat{u}}\dv_g
 \le\frac{1}{2}\int_M |\nabla\widehat{u}|^2\dv_g+ C.
\end{equation}
This implies that~$e^{u}$ is~$L^p$ integrable for any~$p> 0$. 
Moreover, the map
\begin{equation}
 H^1(M)\ni u\mapsto e^u\in L^1(M)
\end{equation}
is compact (see e.g.~\cite[Theorem 2.46]{aubin1998somenonlinear}). 
It follows that the maps~$H^1(M)\ni u\mapsto e^u\in L^p(M)$ are compact for all~$p > 0$.  

\section{A natural constraint and the Palais-Smale condition}

It is standard to prove that the functional
\begin{equation}
 J_\rho\colon H^1(M)\times H^{\frac{1}{2}}(S)\to \R
\end{equation}
defined in formula \eqref{eq:super-Liouville functional}  is  of class~$C^1$. 
The critical points of~$J_\rho$, which are weak solutions of~\eqref{eq:EL for super-Liouville}, are actually smooth.
To see this we can use the argument from~\cite{jost2007superLiouville}.
Note that, although the authors there are using different Banach spaces, the proof goes quite similarly and is omitted here.
Alternatively, note that~$u\in H^1(M)$ implies~$e^{u}\in L^p(M)$ for any~$p<\infty$,i.e. the equation is actually subcritical and we can appeal to a bootstrap argument to obtain the full regularity. 
 
To obtain a non-trivial solution to the system~\eqref{eq:EL for super-Liouville} we employ a min-max approach.
As observed, thanks to the conformal covariance of the system, it is sufficient to consider the uniformized metric. 
From now on we assume that~$g$ has constant Gaussian curvature~$K\equiv-1$. 
For this choice we then look for non-trivial critical points of the functional
\begin{equation}\label{eq:functional-uniformized metric}
 J_\rho(u,\psi)
 =\int_M \biggr( |\nabla u|^2-2u+e^{2u}+2\left(\left<\D\psi,\psi\right>-\rho e^u|\psi|^2\right) \biggr)\dv_g,
\end{equation}
which are non-trivial solutions of the system
\begin{equation}\label{eq:EL-uniformized metric}\tag{$EL_0$} 
 \begin{cases}
  \Delta_g u=e^{2u}-1-\rho e^u|\psi|^2, \vspace{0.2cm}\\
  \D_g\psi= \rho e^u\psi.     
 \end{cases} 
\end{equation}
The argument in the sequel is simplified by this assumption, but it can be modified and adapted to a general metric. 
Note that in the uniformized case the Gauss-Bonnet formula yields
\begin{equation}
 vol(M,g)=-2\pi\chi(M)=4\pi(\genus-1). 
\end{equation}

Observe that in the functional~$J_\rho$ the first part is coercive and convex. The main difficulty is due to the spinorial part which is strongly indefinite. 
To overcome this issue we are inspired by an idea from~\cite{maalaoui2017characterization} and we consider a natural constraint: in the next section we will find critical points of the restricted functional. 

\

\subsection{A Nehari type manifold}
Roughly speaking, the space~$H^{\frac{1}{2},-}(S)$ defined in \eqref{eq:split} contains infinitely-many directions decreasing the functional $J_\rho$ to negative infinity and the usual variational approaches can not be applied. 
Hence we introduce a natural constraint in order to exclude most of these directions, obtaining a submanifold in~$H^1(M)\times H^{\frac{1}{2}}(S)$, which we still call it a \emph{Nehari manifold}, though it may not fit into the classical definition as in~\cite{ambrosetti2007nonlinear}.
This may be considered to be a Nehari manifold in the generalized sense, as in~\cite{pankov2005periodic, szulkin2009ground, szulkin2010themethod}.  

Let~$P^\pm\colon H^{\frac{1}{2}}(S)\to H^{\frac{1}{2},\pm}(S)$ be the orthonormal projection according to the splitting in \eqref{eq:split}.
Consider the map
\begin{align}
 G\colon H^1(M)\times H^{\frac{1}{2}}(S)&\to H^{\frac{1}{2},-}(S), \\
 (u,\psi)&\mapsto P^-\left[(1+|\D|)^{-1}(\D\psi-\rho e^{u}\psi)\right]. 
\end{align}
Some explanations are in order. 
Recall that~$H^{\frac{1}{2},-}$ is a Hilbert space, with inner product
\begin{align}
 \left<\psi,\varphi\right>_{_{H^{1/2}}}
 =&\left<\psi,\varphi\right>_{L^2}
 +\left<|\D|^{\frac{1}{2}}\psi,|\D|^{\frac{1}{2}}\varphi\right>_{L^2}\\
 =&\left<(1+|\D|)\psi,\varphi\right>_{H^{-\frac{1}{2}}\times H^{\frac{1}{2}}}.
\end{align}
Now, let ~$G(u,\psi)$ be the element in~$H^{\frac{1}{2}}(S)$ such that, for any~$\varphi\in H^{\frac{1}{2}}(S)$, 
\begin{equation}
 \left<G(u,\psi),\varphi \right>_{_{H^{1/2}}}
 =\left<\D\psi-\rho e^u\psi,P^- (\varphi)\right>_{H^{-\frac{1}{2}}\times H^{\frac{1}{2}}}.
\end{equation}
It follows that~$G(u,\psi) \in H^{\frac{1}{2},-}(S)$, and it is given by the Riesz representation theorem as 
\begin{equation}
 G(u,\psi)=P^-\left[(1+|\D|)^{-1}(\D\psi-\rho e^u\psi)\right].    
\end{equation}

Define the {\em Nehari manifold} ~$N=G^{-1}(0)$, which is non-empty since~$(u,0)\in N$ for any~$u\in H^1(M)$. 
Note that, for each~$u$ fixed, the subset 
\begin{equation}
 N_u\coloneqq\left\{\psi\in H^{\frac{1}{2}}(S)\mid (u,\psi)\in N\right\}=\ker \left[ P^-\circ(1+|\D|)^{-1}\circ (\D- \rho e^u) \right]
\end{equation}
is a  linear subspace (of infinite dimension).
Hence we have a fibration~$N\to H^1(M)$  with fiber~$N_u$ over~$u\in H^1(M)$.
The total space~$N$ is contractible. 
\begin{lemma}
 The Nehari manifold $N$ is a natural constraint  for~$J_\rho$, namely every critical point of $J_\rho|_N$ is 
 an unconstrained critical point of $J_\rho$.  
\end{lemma}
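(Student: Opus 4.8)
The plan is to apply the Lagrange multiplier rule on $N=G^{-1}(0)$ and then to show that the multiplier is forced to vanish, exploiting that $\rho>0$ and $e^u>0$. First I would record that, by the self-adjointness of $\D$ and the reality of $g^s$,
\[
 \partial_\psi J_\rho(u,\psi)[\varphi]
 = 4\left\langle \D\psi-\rho e^u\psi,\varphi\right\rangle_{H^{-\frac{1}{2}}\times H^{\frac{1}{2}}},
\]
while $\partial_u J_\rho$ is the ordinary differential of $\int_M(|\nabla u|^2-2u+e^{2u}-2\rho e^u|\psi|^2)\dv_g$. Since $P^-$ and $(1+|\D|)^{-1}$ are both functions of $\D$ in the spectral calculus they commute, and $(1+|\D|)^{-1}$ is injective; hence $(u,\psi)\in N$ if and only if $P^-(\D\psi-\rho e^u\psi)=0$, which by the displayed formula is exactly the statement that $\partial_\psi J_\rho(u,\psi)$ annihilates $H^{\frac{1}{2},-}(S)$. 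Thus $N$ is precisely the locus where $dJ_\rho$ already kills the strongly indefinite directions, which is why one expects it to be a natural constraint.

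Next I would verify that $0$ is a regular value of $G$ along $N$, so that the Lagrange rule is legitimate. Fixing $(u,\psi)\in N$, consider $L\colon H^{\frac{1}{2},-}(S)\to H^{\frac{1}{2},-}(S)$, $L\varphi=P^-[(1+|\D|)^{-1}(\D\varphi-\rho e^u\varphi)]$, the restriction of $dG(u,\psi)$ to $\{0\}\times H^{\frac{1}{2},-}(S)$. Using that $P^-$ is also $\langle\cdot,\cdot\rangle_{H^{\frac{1}{2}}}$-orthogonal and commutes with $1+|\D|$, one computes
\[
 \left\langle L\varphi,\varphi\right\rangle_{H^{\frac{1}{2}}}
 = \left\langle \D\varphi-\rho e^u\varphi,\varphi\right\rangle
 = -\||\D|^{\frac{1}{2}}\varphi\|_{L^2}^2-\rho\int_M e^u|\varphi|^2\dv_g
 \le -c\,\|\varphi\|_{H^{\frac{1}{2}}}^2
\]
on $H^{\frac{1}{2},-}(S)$, the last inequality because the negative eigenvalues of $\D$ are bounded away from $0$; boundedness of $L$ uses that multiplication by $e^u$ is a bounded map $H^{\frac{1}{2}}(S)\to H^{-\frac{1}{2}}(S)$ by the Moser--Trudinger embedding. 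Lax--Milgram then gives that $L$, hence $dG(u,\psi)$, is surjective, and since $H^1(M)\times H^{\frac{1}{2}}(S)$ is a Hilbert space the kernel is complemented; so $N$ is a $C^1$ submanifold.

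Given now a critical point $(u,\psi)$ of $J_\rho|_N$, the Lagrange rule produces $\mu\in H^{\frac{1}{2},-}(S)$, the Riesz representative of the multiplier, with $dJ_\rho(u,\psi)[v,\varphi]=\langle\mu,dG(u,\psi)[v,\varphi]\rangle_{H^{\frac{1}{2}}}$ for all $(v,\varphi)$. I would test this on directions $(0,\varphi)$ with $\varphi\in H^{\frac{1}{2},-}(S)$: the left-hand side is $0$ by the first paragraph, while pushing $1+|\D|$, $P^-$ and $\D$ through the pairings rewrites the right-hand side as $\langle\varphi,\D\mu-\rho e^u\mu\rangle_{H^{\frac{1}{2}}\times H^{-\frac{1}{2}}}$, whence $P^-(\D\mu-\rho e^u\mu)=0$. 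As $\mu\in H^{\frac{1}{2},-}(S)$ this says $\D\mu=\rho P^-(e^u\mu)$; pairing with $\mu$ yields $-\||\D|^{\frac{1}{2}}\mu\|_{L^2}^2=\langle\D\mu,\mu\rangle=\rho\int_M e^u|\mu|^2\dv_g\ge 0$, so both sides vanish and, since $\rho>0$ and $e^u>0$, $\mu=0$. Therefore $dJ_\rho(u,\psi)=0$, i.e. $(u,\psi)$ is an unconstrained critical point of $J_\rho$.

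I expect the main obstacle to be bookkeeping rather than conceptual: keeping straight, in the $H^{\frac{1}{2}}/H^{-\frac{1}{2}}$ duality, that the spectral projections $P^{\pm}$ are simultaneously $L^2$- and $H^{\frac{1}{2}}$-orthogonal and self-dual for the $H^{-\frac{1}{2}}\times H^{\frac{1}{2}}$ pairing, that they commute with $(1+|\D|)^{\pm1}$ and with $\D$, and that multiplication by $e^u$ is bounded $H^{\frac{1}{2}}\to H^{-\frac{1}{2}}$; together with the regular-value check of the second paragraph, which is what legitimizes the Lagrange rule. The one genuinely substantive step is the sign computation of the third paragraph, where the positivity of $\rho$ and of $e^u$ — equivalently, the uniformization normalization of the metric — are exactly what force the multiplier to be zero.
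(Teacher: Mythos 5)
Your proposal is correct and follows essentially the same route as the paper: surjectivity of $\dd G$ via the negative-definiteness of its restriction to $\{0\}\times H^{\frac{1}{2},-}(S)$, the Lagrange multiplier rule, and then testing the multiplier identity against the multiplier itself so that the positivity of $\rho e^u$ together with $\left<\D\varphi,\varphi\right>\le 0$ on $H^{\frac{1}{2},-}(S)$ forces it to vanish. The only (cosmetic) difference is that you package the multiplier as a single Riesz representative $\mu\in H^{\frac{1}{2},-}(S)$ and invoke Lax--Milgram explicitly, whereas the paper writes it as a formal series $\sum_{j<0}\mu_j\varphi_j$ and justifies the Lagrange rule in a footnote.
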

\begin{proof}
 To see that $N$ is a manifold, we show for any~$(u,\psi)$ the surjectivity of the differential~$\dd G(u,\psi)$, which is given by
 \begin{equation}
  \dd G(u,\psi)[v,\phi]=P^-\left[(1+|\D|)^{-1}(\D\phi-\rho e^{u}\phi-\rho e^u v\psi)\right]. 
 \end{equation}
 Restricting to those vectors with~$v=0$ and~$\phi\in H^{\frac{1}{2},-}(S)$, we have 
 \begin{align}
  \left<\dd G(u,\psi)[0,\phi],\phi\right>_{H^{1/2}}
  =&\left<(1+|\D|)^{-1}(\D\phi-\rho e^u\phi),\phi\right>_{H^{1/2}}\\
  =&\int_M \left<\D\phi-\rho e^u\phi,\phi\right>\dv_g \\
  =&-\||\D|^{\frac{1}{2}}\phi\|_{L^2}^2-\rho\int_M e^u|\phi|^2\dv_g. 
 \end{align} 
 Thus~$\left<\dd G(u,\psi)[0,\phi],\phi\right>_{H^{1/2}}$ yields a negative-definite quadratic form on~$H^{\frac{1}{2},-}(S)$.
 In particular,~$\dd G(u,\psi)$ is surjective onto~$H^{\frac{1}{2},-}(S)$, for any~$(u,\psi)$. 
 It follows from the regular value theorem (for an infinite dimensional version, see e.g. \cite{glockner2016fundamentals}) that~$N=G^{-1}(0)$ is a submanifold of~$H^1(M)\times H^{\frac{1}{2}}(S)$.  
 
 Next, we need to show that {if~$(u_0,\psi_0)$ is a critical point of~$J_\rho|_N$, then it is also a critical points of~$J_\rho$ on the full space~$H^1(M)\times H^{\frac{1}{2}}(S)$. }

 Recall that the orthonormal basis~$(\varphi_k)$ for~$H^{\frac{1}{2}}(S)$ consists of eigenspinors. 
 Note that
 \begin{align}
  (u,\psi)\in N
  &\Leftrightarrow  G(u,\psi)=0 
  \Leftrightarrow \int_M \left<\D\psi-\rho e^u \psi, h\right>\dv_g=0, \quad \forall h \in H^{\frac{1}{2},-}\\
  &\Leftrightarrow G_j(u,\psi)\coloneqq \int_M \left<\D\psi-\rho e^u\psi,\varphi_j\right>\dv_g=0, \quad \forall j<0, 
 \end{align}
 that is
 \begin{equation}
  N= G^{-1}(0)=\bigcap_{j<0} G_j^{-1}(0). 
 \end{equation}
 Now let~$(u_0,\psi_0)$ be a critical point of~$J_\rho|_N$: ~$\nabla^N J(u_0,\psi_0)=0$.  
 Then there exist~$\mu_j\in\R$ such that\footnote{To see that such an infinite dimensional version of the Lagrange multiplier theory works, we note that
 \begin{equation}
  \nabla^N J(u_0,\psi_0)= \nabla J(u_0,\psi_0)-\left(\nabla J(u_0,\psi_0)\right)^{\bot} 
 \end{equation}
 where~$\nabla J(u_0,\psi_0)$ denotes the unconstrained gradient and~$(\nabla J(u_0,\psi_0))^\bot$ denotes its normal component.
 Since the gradients~$\{\nabla G_j(u_0,\psi_0): j<0\}$ span the normal space, we can express~$(\nabla J(u_0,\psi_0))^\bot$ in terms of them:
 \begin{equation}
  (\nabla J(u_0,\psi_0))^\bot=\sum_{j<0} \mu_j\nabla G_j(u_0,\psi_0) \in H^{\frac{1}{2}}(S)
 \end{equation}
 for some~$\mu_j\in\R$, ~$j<0$. 
 }
 \begin{equation}\label{eq:Lagrange multiplier}
  \dd J_\rho(u_0,\psi_0)=\sum_{j<0} \mu_j \dd G_j(u_0,\psi_0).
 \end{equation}
 Testing both sides with tangent vectors of the form~$(0,h)$, we have
 \begin{align}
  \int_M\left<\D\psi_0-\rho e^{u_0}\psi_0,h\right>\dv_g
  =\sum_{j<0}\mu_j\int_M\left<\D h-\rho e^{u_0}h, \varphi_j\right>\dv_g.
 \end{align}
 In particular, take~$h = \varphi=\sum_{j<0}\mu_j\varphi_j\in H^{\frac{1}{2},-}$ to obtain
 \begin{align}
  0=\int_M\left<\D\psi_0-\rho e^{u_0}\psi_0,\varphi\right>
  =\int_M\left<\D\varphi-\rho e^{u_0}\varphi,\varphi\right>\dv_g
  \le -C\|\varphi\|^2-\int_M \rho e^{u_0}|\varphi|^2\dv_g.
 \end{align}
 Thus~$\varphi=0$, i.e. ~$\mu_j=0$ for all~$j<0$.  
 Hence in~\eqref{eq:Lagrange multiplier} we have~$\dd J_\rho(u_0,\psi_0)=0$. 
\end{proof}

\

\subsection{Verification of the Palais-Smale condition}
This subsection is devoted to verifying the~$(PS)$ condition for the constrained functional~$J_\rho|_{N}$. 
Note that
\begin{align}
 \dd J_\rho(u,\psi)[v,\phi]
 =\int_M 2(-\Delta u-1+e^{2u}-\rho e^u|\psi|^2)v + 4\left<\D\psi-\rho e^{u}\psi,\phi\right>\dv_g
\end{align}
and for each~$j<0$, with~$G_j$ defined as in the above proof: 
\begin{align}
 \dd G_j(u,\psi)[v,\phi]
 =\int_M \left<\D\phi-\rho e^u\phi,\varphi_j\right>\dv_g -\int_M \rho e^u v\left<\psi,\varphi_j\right>\dv_g.
\end{align}
For each~$(u,\psi)\in N$, there exist constants~$\mu_j(u,\psi)$ such that 
\begin{equation}
 \dd^N J_\rho(u,\psi)
 =\dd J_\rho(u,\psi)-\sum_{j<0} \mu_j(u,\psi)\dd G_j(u,\psi), 
\end{equation}
that is such that for any~$(v,\phi)\in H^1(M)\times H^{\frac{1}{2}}(S)$
\begin{align}
 \dd^N J_\rho(u,\psi)[v,\phi]
 =\dd J(u,\psi)[v,\phi]-\sum_{j<0} \mu_j(u,\psi)\dd G_j(u,\psi)[v,\phi].
\end{align}
Formally  writing~$\varphi(u,\psi)\coloneqq \sum_{j<0} \mu_j\varphi_j$, then 
\begin{align}
\dd^N J(u,\psi)[v,\phi]
=&\int_M 2(-\Delta u-1+e^{2u}-\rho e^u|\psi|^2+\rho e^u\left<\psi, \varphi(u,\psi)\right>)v\dv_g  \\
 &+ \int_M 4 \left( \left<\D\psi-\rho e^{u}\psi,\phi\right>-\left<\D\varphi-\rho e^u\varphi,\phi\right> \right) \dv_g.
\end{align}
Note that this holds for arbitrary~$(v,\phi)$, not only those tangent vectors to~$N$.

Now let~$(u_n,\psi_n)\in N$ be a~$(PS)_c$ sequence for~$J_\rho|_{N}$: this will satisfy 
\begin{equation}\label{eq:PS:at level c}
J_\rho(u_n,\psi_n)=\int_M \left[ |\nabla u_n|^2-2u_n+e^{2u_n}+2\left(\left<\D\psi_n,\psi_n\right>-\rho e^{u_n}|\psi_n|^2\right) \right] \dv_g\to c, 
\end{equation}
\begin{equation}\label{eq:PS:natural constraint}
P^-\circ(1+|\D|)^{-1}\circ (\D\psi_n-\rho e^{u_n}\psi_n)=0; 
\end{equation}
moreover, since the differential of $J_\rho$  is tending to zero only when applied 
to vectors tangent  to $N$, there exists 
 some~$\varphi_n\in H^{\frac{1}{2},-}(S)$ 
such that 
\begin{equation}\label{eq:PS:function part}
2(-\Delta u_n-1+e^{2u_n}-\rho e^{u_n}|\psi_n|^2)-\rho e^{u_n}\left<\psi_n,\varphi_n\right>=\alpha_n\to 0 \textnormal{ in \;} H^{-1}(M),
\end{equation}
\begin{equation}\label{eq:PS:spinor part}
4(\D\psi_n-\rho e^{u_n}\psi_n)-(\D\varphi_n-\rho e^{u_n}\varphi_n)=\beta_n \to 0 \textnormal{ in } H^{-\frac{1}{2}}(S).
\end{equation}

\medspace 

\begin{lemma}
With the same notation as above, we have 
\begin{enumerate}
 \item The auxiliary spinors~$\varphi_n$ satisfy~$\|\varphi_n\|_{H^{\frac{1}{2}}}\to 0$ as~$n\to\infty$. 
 \item The sequence~$(u_n,\psi_n)$ is uniformly bounded (with bounds depending on the level~$c$) in~$H^1(M)\times H^{\frac{1}{2}}(S)$.
\end{enumerate} 
\end{lemma}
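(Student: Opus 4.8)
The plan is to exploit the negative-definiteness already observed in the Nehari construction to pin down the auxiliary spinors first, and then to use the coercivity and convexity of the scalar part together with the Moser--Trudinger inequality to bound $(u_n,\psi_n)$.

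For part (1), I would test the relation \eqref{eq:PS:spinor part} against $\varphi_n\in H^{\frac{1}{2},-}(S)$. Since $(u_n,\psi_n)\in N$, equation \eqref{eq:PS:natural constraint} says precisely that $\int_M\left<\D\psi_n-\rho e^{u_n}\psi_n,h\right>\dv_g=0$ for every $h\in H^{\frac{1}{2},-}(S)$; in particular the term $\int_M\left<\D\psi_n-\rho e^{u_n}\psi_n,\varphi_n\right>\dv_g$ vanishes. What remains is
\begin{equation}
 -\int_M\left<\D\varphi_n-\rho e^{u_n}\varphi_n,\varphi_n\right>\dv_g
 =\langle\beta_n,\varphi_n\rangle_{H^{-\frac12}\times H^{\frac12}},
\end{equation}
and the left-hand side is $\||\D|^{\frac12}\varphi_n\|_{L^2}^2+\rho\int_M e^{u_n}|\varphi_n|^2\dv_g\ge c_0\|\varphi_n\|_{H^{\frac12}}^2$ for some $c_0>0$ depending only on $g$ (using that $\int_M\left<\D\psi^-,\psi^-\right>\le-\lambda_1\|\psi^-\|_{L^2}^2$ and the norm equivalence established in the Preliminaries). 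Hence $c_0\|\varphi_n\|_{H^{\frac12}}^2\le\|\beta_n\|_{H^{-\frac12}}\|\varphi_n\|_{H^{\frac12}}$, which gives $\|\varphi_n\|_{H^{\frac12}}\le c_0^{-1}\|\beta_n\|_{H^{-\frac12}}\to 0$. Note the $\rho e^{u_n}|\varphi_n|^2$ term is automatically nonnegative, so I do not even need bounds on $u_n$ here — this step is genuinely prior to part (2).

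For part (2), the idea is to combine \eqref{eq:PS:at level c}, \eqref{eq:PS:function part}, and \eqref{eq:PS:spinor part} to produce a coercive estimate. First, pairing \eqref{eq:PS:spinor part} with $\psi_n$ and using that $\|\varphi_n\|_{H^{\frac12}}\to0$ from part (1), one extracts that $\int_M(\left<\D\psi_n,\psi_n\right>-\rho e^{u_n}|\psi_n|^2)\dv_g$ is controlled by $o(1)\|\psi_n\|_{H^{\frac12}}+o(1)$; substituting this into \eqref{eq:PS:at level c} shows
\begin{equation}
 \int_M\left(|\nabla u_n|^2-2u_n+e^{2u_n}\right)\dv_g \le c+1+o(1)\|\psi_n\|_{H^{\frac12}}.
\end{equation}
Next, testing \eqref{eq:PS:function part} against $u_n$ itself (legitimate since $u_n\in H^1\subset H^1$ pairs with $H^{-1}$) yields $\int_M(|\nabla u_n|^2+e^{2u_n}u_n - u_n - \rho e^{u_n}|\psi_n|^2 u_n)\dv_g$ plus the $\varphi_n$ term equals $\langle\alpha_n,u_n\rangle=o(1)\|u_n\|_{H^1}$; this is the relation that will ultimately be combined with the energy bound to absorb the spinorial coupling. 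The scalar functional $u\mapsto\int_M(|\nabla u|^2-2u+e^{2u})\dv_g$ is convex and coercive on $H^1(M)$ (its linear part is $-2\bar u\cdot\mathrm{vol}(M)$, the gradient term controls $\|\nabla\hat u\|_{L^2}$, and $e^{2\bar u}$ together with $-2\bar u$ controls $\bar u$), so once the right-hand sides are shown to be $o(1)$ times the relevant norms, one gets $\|u_n\|_{H^1}\le C(c)$.

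The main obstacle is the coupling term $\rho e^{u_n}|\psi_n|^2$ and closing the loop between the bound on $\|u_n\|_{H^1}$ and the bound on $\|\psi_n\|_{H^{\frac12}}$: each estimate as written controls one norm in terms of the other, so they must be fed into one another. The key tool is the Moser--Trudinger inequality from the Preliminaries: once $\|u_n\|_{H^1}\le C$, we get $\|e^{u_n}\|_{L^p}\le C_p$ for every $p<\infty$, and then $\int_M\rho e^{u_n}|\psi_n|^2\dv_g\le\rho\|e^{u_n}\|_{L^2}\|\psi_n\|_{L^4}^2\le C\|\psi_n\|_{H^{\frac12}}^2$ by the embedding $H^{\frac12}(S)\hookrightarrow L^4(S)$ — but with a constant that need not be small, which is exactly why one cannot absorb it into $\||\D|^{\frac12}\psi_n\|_{L^2}^2$ directly. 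Resolving this requires using that $(u_n,\psi_n)\in N$ more carefully: the Nehari constraint forces $\psi_n$ to live in the graph-like space $N_{u_n}$ on which the quadratic form $\psi\mapsto\int_M(\left<\D\psi,\psi\right>-\rho e^{u_n}|\psi|^2)\dv_g$ is positive-definite (this is the content of the sign computation in the previous lemma, applied with the roles of $+$ and $-$ interchanged), and one must show this positivity is uniform, giving $\|\psi_n\|_{H^{\frac12}}^2\le C\int_M(\left<\D\psi_n,\psi_n\right>-\rho e^{u_n}|\psi_n|^2)\dv_g$ with $C$ independent of $n$ — this uniformity, which hinges on $\rho\notin\Spec(\D_{g_0})$ and on the already-established $H^1$-bound for $u_n$, is where the hypothesis on $\rho$ enters and is the crux of the argument. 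With that in hand, the energy identity \eqref{eq:PS:at level c} closes the estimate and bounds $\|\psi_n\|_{H^{\frac12}}$, completing the proof.
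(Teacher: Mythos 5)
Your argument for part (1) is correct and coincides with the paper's: the Nehari constraint kills the pairing of $\D\psi_n-\rho e^{u_n}\psi_n$ with $\varphi_n$, and the remaining quadratic form in $\varphi_n$ is coercive on $H^{\frac12,-}(S)$ because the $e^{u_n}$-term has a favorable sign there. Your opening moves for part (2) — pairing \eqref{eq:PS:spinor part} with $\psi_n$ to get $\int_M(\langle\D\psi_n,\psi_n\rangle-\rho e^{u_n}|\psi_n|^2)\dv_g=o(\|\psi_n\|)$ and feeding this into \eqref{eq:PS:at level c} to bound $F(u_n)$ — also match the paper.

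However, the step you identify as the crux of part (2) is not merely unproven: it is false. The quadratic form $\psi\mapsto\int_M(\langle\D\psi,\psi\rangle-\rho e^{u}|\psi|^2)\dv_g$ is \emph{not} positive definite on the fibre $N_u$, uniformly or otherwise. At $u=0$ one checks directly that $N_0=H^{\frac12,+}(S)$ and the form equals $\sum_{j>0}(\lambda_j-\rho)a_j^2$, which is negative in the direction $\varphi_1$ whenever $\rho>\lambda_1$ — this residual indefiniteness on $N$ is precisely why the paper needs a linking (rather than minimization) argument in Section 4. Worse, if your claimed inequality $\|\psi_n\|^2\le C\int_M(\langle\D\psi_n,\psi_n\rangle-\rho e^{u_n}|\psi_n|^2)\dv_g$ held, then combined with the estimate $o(\|\psi_n\|)$ for the right-hand side it would force $\psi_n\to0$ for \emph{every} Palais--Smale sequence; and since any exact solution satisfies $\int_M(\langle\D\psi,\psi\rangle-\rho e^{u}|\psi|^2)\dv_g=0$, it would force $\psi=0$ for every solution, contradicting the theorem being proved. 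The paper closes the loop by a different mechanism: (i) test \eqref{eq:PS:function part} against the constant $v\equiv1$ to get the identity $\int_M e^{2u_n}\dv_g=4\pi(\genus-1)+\rho\int_M e^{u_n}|\psi_n|^2\dv_g+o(1)$ (up to the small $\varphi_n$-correction), which together with Jensen's inequality controls $\bar u_n$ and $\int_Me^{2u_n}\dv_g$ in terms of $c+o(\|\psi_n\|)$; (ii) estimate $\|\psi_n^+\|$ by testing \eqref{eq:PS:spinor part} against $\psi_n^+$ and $\|\psi_n^-\|$ by testing the constraint \eqref{eq:PS:natural constraint} against $\psi_n^-$, each via H\"older using the quantities bounded in (i). The resulting inequality has the form $\|\psi_n\|^2\le C(1+c+o(\|\psi_n\|))^{3/4}\|\psi_n\|+o(\|\psi_n\|)$, which is superlinear on the left and sublinear on the right, and therefore yields the uniform bound without any positivity of the quadratic form. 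Your test of \eqref{eq:PS:function part} against $u_n$ is a detour: it introduces the non-sign-definite terms $\int_Me^{2u_n}u_n\dv_g$ and $\int_M\rho e^{u_n}|\psi_n|^2u_n\dv_g$, whereas testing against $1$ gives the clean balance identity that the whole argument rests on.
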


\begin{proof}
 (1) Testing~\eqref{eq:PS:natural constraint} against~$\varphi_n$ we find 
     \begin{equation}
     \int_M \left<\D\psi_n-\rho e^{u_n}\psi_n,\varphi_n\right>\dv_g=0, 
     \end{equation}
     while testing ~\eqref{eq:PS:spinor part} against~$\varphi_n$ we get 
	\begin{equation}
	 -\int_M \left<\D\varphi_n,\varphi_n\right>\dv_g +\rho\int_M e^{u_n}|\varphi_n|^2\dv_g=\left<\beta_n,\varphi_n\right>.
	\end{equation}	     
    Since~$\varphi_n$ lies in the span of the negative eigenspinors, we see that 
    \begin{equation}
    C\|\varphi_n\|_{H^{\frac{1}{2}}}^2+\rho\int_M e^{u_n}|\varphi_n|^2\dv_g=o(\|\varphi_n\|_{H^{\frac{1}{2}}}). 
    \end{equation}
    It follows that as~$n\to\infty$, 
    \begin{align}
    \|\varphi_n\|_{H^{\frac{1}{2}}}\to 0,  \qquad \int_M \rho e^{u_n}|\varphi_n|^2\dv_g \to 0. 
    \end{align}    
    
\

(2) Testing~\eqref{eq:PS:function part} against~$v\equiv 1\in H^1(M)$, we obtain 
	\begin{align}
	2\int_M e^{2u_n}\dv_g -2\int_M \dv_g-2\rho\int_M \left( e^{u_n}|\psi_n|^2-e^{u_n}\left<\psi_n,\varphi_n\right> \right)\dv_g
	=\left<\alpha_n,1\right>_{H^{-1}\times H^1},
	\end{align}
	which can be read as 
	\begin{equation}\label{eq:estimate of e-2u}
	\int_M e^{2u_n}\dv_g
	=4\pi(\genus-1)+\rho\int_M e^{u_n}|\psi_n|^2\dv_g+\frac{1}{2}\rho\int_M e^{u_n}\left<\psi_n,\varphi_n\right>\dv_g +o(1).
	\end{equation}
	Now we can control the second integral on the right-hand side by
	\begin{equation}\label{eq:estimate of mixed term:spinor-multiplier-fucntion}
	\left|\frac{\rho}{2}\int_M e^{u_n}\left<\psi_n,\varphi_n\right>\dv_g\right|
	\le \varepsilon\int_M \rho e^{u_n}|\psi_n|^2\dv_g+\varepsilon\int_M e^{2u_n}\dv_g+ C(\varepsilon,\rho)\|\varphi_n\|^4, 
	\end{equation}
    where~$\varepsilon>0$ is some small number. 
    Substituting this into~\eqref{eq:estimate of e-2u} and noting that~$\|\varphi_n\|=o(1)$, we get
    \begin{equation}
    \int_M e^{2u_n}\ge \frac{4\pi(\genus-1)}{1+\varepsilon}+\frac{1-\varepsilon}{1+\varepsilon}\int_M \rho e^{u_n}|\psi_n|^2\dv_g+o(1).
    \end{equation}
    
	Testing ~\eqref{eq:PS:spinor part} against~$\psi_n$ we deduce 
	\begin{equation}
	4\int_M \left( \left<\D\psi_n,\psi_n\right>-\rho e^{u_n}|\psi_n|^2 \right)\dv_g -\int_M \left<\D\varphi_n-\rho e^{u_n}\varphi_n,\psi_n\right>\dv_g
	=\left<\beta_n,\psi_n\right>_{H^{-\frac{1}{2}}\times H^{\frac{1}{2}}}.
	\end{equation}
	Since the second integral vanishes because of~\eqref{eq:PS:natural constraint}, we thus get
	\begin{equation}
	\int_M \left( \left<\D\psi_n,\psi_n\right>-\rho e^{u_n}|\psi_n|^2 \right) \dv_g =o(\|\psi_n\|).
	\end{equation}
	Combining these  estimates with~\eqref{eq:PS:at level c} we see that
	\begin{align}
	c+o(1)
	=&\int_M|\nabla\widehat{u}_n|^2\dv_g-8\pi(\genus-1)\bar{u}_n+\int_M e^{2u_n}\dv_g
	+2\int_M \left(\left<\D\psi_n,\psi_n\right>-\rho e^{u_n}|\psi_n|^2 \right)\dv_g  \\
	\ge&\int_M|\nabla\widehat{u}_n|^2\dv_g-4\pi(\genus-1)(2\bar{u}_n-\frac{1}{1+\varepsilon})
	+\frac{1-\varepsilon}{1+\varepsilon}\rho\int_M e^{u_n}|\psi_n|^2\dv_g
	 +C(\varepsilon,\rho)o(1)+o(\|\psi_n\|),
	\end{align}
	which is to say,
	\begin{align}\label{eq:estimate of Dirichlet and mix-spinor-function}
	\int_M |\nabla \widehat{u}_n|^2+\frac{1-\varepsilon}{1+\varepsilon}\rho e^{u_n}|\psi_n|^2\dv_g
	\le c+4\pi(\genus-1)(2\bar{u}_n-\frac{1}{1+\varepsilon})+C(\varepsilon,\rho)o(1)+o(\|\psi_n\|).
	\end{align}
	
	\medskip
	
	Now we estimate the averages~$\bar{u}_n$. 
	Note that by~\eqref{eq:estimate of e-2u} and~\eqref{eq:estimate of mixed term:spinor-multiplier-fucntion} we also obtain
	\begin{align}
	 \int_M e^{2u_n}\dv_g 
	 \le \frac{4\pi(\genus-1)}{1-\varepsilon}
			+\frac{1+\varepsilon}{1-\varepsilon}\int_M \rho e^{u_n}|\psi_n|^2\dv_g +C(\varepsilon,\rho)o(1).	 
	\end{align}
	Then by Jensen's inequality,
	\begin{align}
	e^{2\bar{u}_n}
	\le & e^{2\bar{u}_n}\fint_M e^{2\widehat{u}_n}\dv_g =\frac{1}{4\pi(\genus-1)}\int_M e^{2u_n}\dv_g \\
	\le& \frac{1}{1-\varepsilon}+\frac{1}{4\pi(\genus-1)}\frac{1+\varepsilon}{1-\varepsilon}\int_M \rho e^{u_n}|\psi_n|^2\dv_g+ C(\varepsilon,\rho,\genus) o(1) \\
	\le&\frac{1}{1-\varepsilon}+\left(\frac{1+\varepsilon}{1-\varepsilon}\right)^2 \left(\frac{c}{4\pi(\genus-1)}+2\bar{u}_n-\frac{1}{1+\varepsilon}\right)+ C(\varepsilon,\rho,\genus) o(1)+ C(\varepsilon,\genus) o(\|\psi_n\|). 
	\end{align}
	Thus there exists ~$C=C(\varepsilon,\rho,\genus)>0$ such that 
	\begin{equation}
	|\bar{u}_n|\le C\bigr(1+c+o(\|\psi_n\|)\bigr).
	\end{equation}
	
	\medskip
	
	The spinors can be controlled by the above growth estimates. 
	Testing~\eqref{eq:PS:spinor part} against~$\psi_n^+$, we find 
	\begin{align}
	4\int_M \left( \left<\D\psi_n,\psi_n^+\right>-\rho e^{u_n}\left<\psi_n,\psi_n^+\right> \right) \dv_g
	-\int_M \left<\D\varphi_n-\rho e^{u_n}\varphi_n,\psi_n^+\right>\dv_g
	=\left<\beta_n,\psi_n^+\right>_{H^{-\frac{1}{2}}\times H^{\frac{1}{2}}}.
	\end{align}
	It follows that
	\begin{align}
	C\|\psi_n^+\|^2
	\le& \int_M\left<\D\psi_n,\psi^+\right>\dv_g
	    =\int_M\rho e^{u_n}\left<\psi_n,\psi_n^+\right>\dv_g
	    +\frac{1}{4}\int_M\left<\D\varphi_n-\rho e^{u_n}\varphi_n,\psi_n^+\right>\dv_g+o(\|\psi_n^+\|)\\
	\le& \left(\int_M e^{u_n}|\psi_n|^2\dv_g\right)^{\frac{1}{2}}
			\left(\int_M e^{2u_n}\dv_g\right)^{\frac{1}{4}}\left(\int_M|\psi_n^+|^4\dv_g\right)\\
		&	+\|\varphi_n\|\|\psi_n^+\|
			+\rho\left(\int_M e^{2u_n}\dv_g\right)^{\frac{1}{2}}\|\varphi_n\| \|\psi_n^+\| +o(\|\psi_n^+\|) \\
    \le& C\left(1+c+o(\|\psi_n\|)\right)^{\frac{3}{4}}\|\psi_n^+\| + o(\|\psi_n^+\|).
	\end{align}
 	For what concerns  the other component~$\psi_n^-$, we use~\eqref{eq:PS:natural constraint} to get
 	\begin{align}
 	C\|\psi_n^-\|^2
 	\le &-\int_M \left<\D\psi_n^-,\psi_n\right>\dv_g
 		=-\rho\int_M e^{u_n}\left<\psi_n,\psi_n^-\right>\dv_g \\
 	\le& \rho\left(\int_M e^{2u_n}\dv_g\right)^{\frac{1}{4}} \left(\int_M e^{u_n}|\psi_n|^2\dv_g\right)^{\frac{1}{2}}\|\psi_n^-\|\\
 	\le& C(1+c+o(\|\psi_n\|))^{\frac{3}{4}} \|\psi_n^-\|.
 	\end{align}
 	Consequently,
 	\begin{equation}
 	\|\psi_n\|^2=\|\psi_n^+\|^2+\|\psi_n^-\|^2 
 	\le C\bigr(1+c+o(\|\psi_n\|)\bigr)^{\frac{3}{4}}\|\psi_n\| + o(\|\psi_n\|).
 	\end{equation}
     Thus there exists some constant~$C=C(c,\genus,\rho)>0$ such that
 	\begin{equation}
  	\|\psi_n\|\le C(c,\genus,\rho)<+\infty.
 	\end{equation}
 	This uniform bound (depending on the level~$c$) in turn gives bounds on ~$\bar{u}_n$ and thus
 	\begin{equation}
  	\int_M \left( |\nabla \widehat{u}_n|^2+\rho e^{u_n}|\psi_n|^2 \right) \dv_g 
  	\le C'(c,\genus,\rho)<\infty. 
 	\end{equation}
\end{proof}

Now, passing to a subsequence if necessary, we may assume that there exist~$u_\infty\in H^1(M)$ and~$\psi_\infty\in H^{\frac{1}{2}}(S)$ such that
\begin{align}
 &u_n\rightharpoonup u_\infty \textnormal{ weakly in } H^1(M), \\
 &\psi_n\rightharpoonup \psi_\infty \textnormal{ weakly in } H^{\frac{1}{2}}(S). 
\end{align}

\medspace 

\begin{lemma}
 The pair~$(u_\infty,\psi_\infty)$ is a smooth solution of~\eqref{eq:EL-uniformized metric}. 
\end{lemma}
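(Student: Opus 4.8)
The plan is to upgrade the weak convergence $u_n \rightharpoonup u_\infty$, $\psi_n \rightharpoonup \psi_\infty$ to strong convergence, after which passing to the limit in the equations is routine, and smoothness follows from elliptic regularity and the bootstrap argument already recalled in the paper. First I would exploit the compactness results from the Preliminaries: the Moser--Trudinger embedding makes $u \mapsto e^u$ and $u \mapsto e^{2u}$ compact from $H^1(M)$ to $L^p(M)$ for every $p$, so $e^{u_n} \to e^{u_\infty}$ and $e^{2u_n} \to e^{2u_\infty}$ strongly in, say, $L^3(M)$; similarly the compact embedding $H^{1/2}(S) \hookrightarrow L^q(S)$ for $q < 4$ gives $\psi_n \to \psi_\infty$ strongly in $L^3(S)$, hence $e^{u_n}|\psi_n|^2 \to e^{u_\infty}|\psi_\infty|^2$ in $L^1(M)$ and $e^{u_n}\psi_n \to e^{u_\infty}\psi_\infty$ in some negative Sobolev space. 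From part (1) of the previous lemma we also know $\varphi_n \to 0$ strongly in $H^{1/2}(S)$.

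Next I would treat the scalar component. Equation~\eqref{eq:PS:function part} reads $-\Delta u_n = 1 - e^{2u_n} + \rho e^{u_n}|\psi_n|^2 + \tfrac12\rho e^{u_n}\langle\psi_n,\varphi_n\rangle + \tfrac12\alpha_n$; the right-hand side converges strongly in $L^1(M)$ (using the $L^1$-convergence of the nonlinear terms, $\varphi_n \to 0$, and $\alpha_n \to 0$ in $H^{-1}$), so $\Delta u_n$ converges, and together with the already-established uniform $H^1$ bound and the weak limit, one upgrades $u_n \to u_\infty$ strongly in $H^1(M)$ (for instance, testing the difference of equations for $u_n$ and $u_m$ against $\widehat{u}_n - \widehat{u}_m$, or by the standard argument that $-\Delta$ is an isomorphism $H^1_0 \to H^{-1}$ together with convergence of averages). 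Passing to the limit then shows $u_\infty$ solves $\Delta_g u_\infty = e^{2u_\infty} - 1 - \rho e^{u_\infty}|\psi_\infty|^2$ weakly.

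For the spinor component I would pass to the limit in~\eqref{eq:PS:spinor part}: since $\varphi_n \to 0$ in $H^{1/2}$ we have $\D\varphi_n - \rho e^{u_n}\varphi_n \to 0$ in $H^{-1/2}(S)$, and $\beta_n \to 0$, so in the limit $\D\psi_\infty = \rho e^{u_\infty}\psi_\infty$ weakly; equivalently, the constraint~\eqref{eq:PS:natural constraint} persists in the limit, $P^-(1+|\D|)^{-1}(\D\psi_\infty - \rho e^{u_\infty}\psi_\infty) = 0$, and combined with the full Euler--Lagrange relation one recovers the second equation of~\eqref{eq:EL-uniformized metric}. To get strong convergence $\psi_n \to \psi_\infty$ in $H^{1/2}(S)$ I would use the equivalent norm $\||\D|^{1/2}\psi^+\|_{L^2}^2 + \||\D|^{1/2}\psi^-\|_{L^2}^2$: testing~\eqref{eq:PS:spinor part} against $\psi_n^\pm - \psi_\infty^\pm$ and using that $e^{u_n}\psi_n \to e^{u_\infty}\psi_\infty$ strongly in $L^{4/3}(S) \hookrightarrow H^{-1/2}(S)$ (by the compactness just recorded) shows $\int_M \langle \D\psi_n, \psi_n^\pm - \psi_\infty^\pm\rangle \to 0$, which forces $\|\psi_n^\pm - \psi_\infty^\pm\|_{H^{1/2}} \to 0$. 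Finally, with $(u_\infty,\psi_\infty) \in H^1(M) \times H^{1/2}(S)$ a weak solution of~\eqref{eq:EL-uniformized metric}, smoothness follows: $u_\infty \in H^1$ gives $e^{u_\infty} \in L^p$ for all $p$, so $\D\psi_\infty \in L^p$ forces $\psi_\infty \in W^{1,p}$, hence $\psi_\infty$ is continuous, then the right-hand side of the $u$-equation lies in better and better spaces, and a standard bootstrap yields $C^\infty$.

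The main obstacle is the strong convergence of the spinor component. The Dirac operator is sign-indefinite, so one cannot simply quote a coercivity estimate on all of $H^{1/2}(S)$; the argument must be run separately on $H^{1/2,+}$ and $H^{1/2,-}$, and one must be careful that the cross-term $\rho e^{u_n}\langle\psi_n, \psi_n^\pm - \psi_\infty^\pm\rangle$ and the auxiliary-spinor term genuinely go to zero, which is where the compactness of $u \mapsto e^u$ from the Moser--Trudinger inequality and the estimate $\|\varphi_n\|_{H^{1/2}} \to 0$ both get used. Once strong $H^{1/2}$-convergence of $\psi_n$ is in hand, strong $H^1$-convergence of $u_n$ and the passage to the limit are comparatively soft.
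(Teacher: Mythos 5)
Your argument for the lemma itself --- passing to the weak limit in the equations using the compactness of the Moser--Trudinger embedding and of $H^{1/2}(S)\hookrightarrow L^q(S)$ for $q<4$, together with $\varphi_n\to 0$ and $\alpha_n,\beta_n\to 0$, and then bootstrapping for smoothness --- is exactly the paper's proof. The extra work you do on strong $H^1(M)\times H^{1/2}(S)$ convergence is not needed for this statement; it is the content of the paper's subsequent step (establishing the Palais--Smale condition), where your $\psi^\pm$-splitting argument is a valid alternative to the paper's use of the equations for the differences $u_n-u_\infty$ and $\psi_n-\psi_\infty$.
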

\begin{proof}
 According to the compactness of the Moser--Trudinger embedding (\cite[theorem 2.46]{aubin1998somenonlinear}), we see that
 \begin{equation}
  e^{u_n} \to e^{u_\infty} \textnormal{ strongly in } L^p(M),\quad   (p<\infty).
 \end{equation}
 Meanwhile, thanks to Rellich-Kondrachov compact embedding Theorem (see e.g.~\cite{gilbarg2001elliptic})
 \begin{equation}
  \psi_n\to \psi_\infty \textnormal{ strongly in }  L^q(S), \quad(q<4).
 \end{equation}
 Hence~$e^{u_n}|\psi_n|^2$ converges weakly in~$L^p$ to~$e^{u_\infty}|\psi_\infty|^2$, for any~$p<2$.

 It follows that $(u_\infty, \psi_\infty)$ is a weak solution to~\eqref{eq:EL-uniformized metric}.
 As remarked before, any weak solution is a classical, hence smooth, solution.  
\end{proof}

In particular, this implies that the weak limit~$(u_\infty,\psi_\infty)$ is in the Nehari manifold~$N$.  
Consider the differences
\begin{align}
 &v_n\coloneqq u_n-u_\infty, \\
 &\phi_n\coloneqq \psi_n-\psi_\infty.
\end{align}
Then~$(v_n,\phi_n)\rightharpoonup (0,0)$ weakly in~$H^1(M)\times H^{\frac{1}{2}}(S)$. 
The functions~$v_n$ satisfy
\begin{align}
 \Delta_g v_n
 =(e^{2u_n}-e^{2u_\infty})-\rho\left(e^{u_n}|\psi_n|^2-e^{u_\infty}|\psi_\infty|^2\right), 
\end{align}
where the right-hand sides converge to~$0$ in~$L^q(M)$ for any~$q<2$.  
This implies that the~$v_n$'s are uniformly bounded in~$H^1(M)$ and converge strongly to a limit function~$v_\infty\in H^1(M)$ satisfying
\begin{equation}
 \Delta_g v_\infty=0.
\end{equation}
Thus~$v_\infty=const.$, which has to be zero since~$v_n\rightharpoonup 0$. 
This implies the strong convergence of~$u_n$ to~$u_\infty$ in~$H^1(M)$. 
Let us look next at the equations for~$\phi_n$'s:
\begin{equation}
 \D_g\phi_n=\rho e^{u_n}\psi_n- \rho e^{u_\infty}\psi_\infty, 
\end{equation}
where the right-hand sides converges to~$0$ in~$L^q(S)$ for any~$q<4$.
Thus there exists ~$\phi_\infty\in H^{\frac{1}{2}}(S)$ such that~$\phi_n$ converges strongly in~$H^{\frac{1}{2}}(S)$ to~$\phi_\infty$, which satisfies
\begin{equation}
 \D_g\phi_\infty=0.
\end{equation}
By the assumption of trivial kernel on $\D_g$, we have that~$\phi_\infty=0$, that is ~$\psi_n$ converges strongly to~$\psi_\infty$ in~$H^{\frac{1}{2}}(S)$. 
Since~$N$ is a submanifold of~$H^1(M)\times H^{\frac{1}{2}}(S)$, the sequence~$(u_n,\psi_n)$ also converges inside~$N$ to~$(u_\infty,\psi_\infty)$; in other words,~$(u_\infty,\psi_\infty)$ lies in the closure of~$\{(u_n,\psi_n)\}$ relative to~$N$. 
Thus we verified the following 

\

\begin{prop}
 The functional~$J_\rho|_N$ satisfies the Palais-Smale condition. 
\end{prop}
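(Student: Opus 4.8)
The plan is to fix an arbitrary $(PS)_c$ sequence $(u_n,\psi_n)\in N$ and extract a subsequence converging strongly in $H^1(M)\times H^{\frac12}(S)$; since $N$ is an embedded submanifold, the limit automatically lies in $N$ and is a critical point of $J_\rho|_N$. The starting data are those already recorded: \eqref{eq:PS:at level c} (the energy level), \eqref{eq:PS:natural constraint} (membership in $N$), and the Lagrange-multiplier form \eqref{eq:PS:function part}--\eqref{eq:PS:spinor part} of the Euler--Lagrange system, with auxiliary spinors $\varphi_n\in H^{\frac12,-}(S)$ and errors $\alpha_n\to0$ in $H^{-1}(M)$, $\beta_n\to0$ in $H^{-\frac12}(S)$. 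The first step is to dispose of the multipliers: pairing \eqref{eq:PS:spinor part} with $\varphi_n$ and using \eqref{eq:PS:natural constraint} to cancel the term $\int_M\langle\D\psi_n-\rho e^{u_n}\psi_n,\varphi_n\rangle\,\dv_g$, one is left with $-\int_M\langle\D\varphi_n,\varphi_n\rangle\,\dv_g+\rho\int_M e^{u_n}|\varphi_n|^2\,\dv_g=\langle\beta_n,\varphi_n\rangle$; since $\varphi_n$ is supported on the negative part of $\Spec(\D)$, the left-hand side dominates $C\|\varphi_n\|_{H^{1/2}}^2+\rho\int_M e^{u_n}|\varphi_n|^2\,\dv_g$, whence $\|\varphi_n\|_{H^{1/2}}\to0$ and $\int_M e^{u_n}|\varphi_n|^2\,\dv_g\to0$. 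This is precisely the place where the natural-constraint property of $N$ is used.

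The second and technically heaviest step is the a priori bound. Testing \eqref{eq:PS:function part} against the constant $1$ produces the balance identity \eqref{eq:estimate of e-2u} tying $\int_M e^{2u_n}\,\dv_g$ to $\int_M\rho e^{u_n}|\psi_n|^2\,\dv_g$, a mixed term, and $4\pi(\genus-1)$; absorbing the mixed term by Young's inequality and using $\|\varphi_n\|\to0$ squeezes $\int_M e^{2u_n}\,\dv_g$ between constant multiples of $4\pi(\genus-1)+\int_M\rho e^{u_n}|\psi_n|^2\,\dv_g$. Testing \eqref{eq:PS:spinor part} against $\psi_n$ and invoking \eqref{eq:PS:natural constraint} gives $\int_M(\langle\D\psi_n,\psi_n\rangle-\rho e^{u_n}|\psi_n|^2)\,\dv_g=o(\|\psi_n\|)$, which, substituted into \eqref{eq:PS:at level c}, controls $\int_M|\nabla\widehat u_n|^2\,\dv_g$ and $\int_M\rho e^{u_n}|\psi_n|^2\,\dv_g$ linearly in $c$ and $\bar u_n$. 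The average $\bar u_n$ is then bounded by combining the upper bound on $\int_M e^{2u_n}\,\dv_g$ with Jensen's inequality $e^{2\bar u_n}\le\fint_M e^{2u_n}\,\dv_g$, which yields a linear-in-$\bar u_n$ inequality for $e^{2\bar u_n}$ and hence $|\bar u_n|\le C(1+c+o(\|\psi_n\|))$. Finally the spinor: testing \eqref{eq:PS:spinor part} against $\psi_n^+$ and estimating the right-hand side by Hölder with the now-controlled $\int_M e^{2u_n}\,\dv_g$ and $\int_M e^{u_n}|\psi_n|^2\,\dv_g$ gives $C\|\psi_n^+\|^2\le C(1+c+o(\|\psi_n\|))^{3/4}\|\psi_n^+\|+o(\|\psi_n^+\|)$, while \eqref{eq:PS:natural constraint} does the analogous job for $\psi_n^-$ via $-\int_M\langle\D\psi_n^-,\psi_n\rangle\,\dv_g=-\rho\int_M e^{u_n}\langle\psi_n,\psi_n^-\rangle\,\dv_g$. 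Adding the two, $\|\psi_n\|$ satisfies a subquadratic inequality and is therefore bounded by some $C(c,\genus,\rho)$; this bound feeds back into $\bar u_n$ and $\int_M|\nabla\widehat u_n|^2\,\dv_g$, so $(u_n,\psi_n)$ is bounded in $H^1(M)\times H^{\frac12}(S)$.

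From here the argument is soft. Up to a subsequence $u_n\rightharpoonup u_\infty$ in $H^1(M)$ and $\psi_n\rightharpoonup\psi_\infty$ in $H^{\frac12}(S)$; by the compactness of $u\mapsto e^u$ into every $L^p(M)$ and Rellich's theorem ($\psi_n\to\psi_\infty$ in $L^q(S)$, $q<4$) the nonlinearities pass to the limit, so $(u_\infty,\psi_\infty)$ is a weak, hence smooth, solution of \eqref{eq:EL-uniformized metric} and in particular lies in $N$. To upgrade the convergence I would examine the differences $v_n\coloneqq u_n-u_\infty$, $\phi_n\coloneqq\psi_n-\psi_\infty$: the scalar equation $\Delta_g v_n=(e^{2u_n}-e^{2u_\infty})-\rho(e^{u_n}|\psi_n|^2-e^{u_\infty}|\psi_\infty|^2)$ has right-hand side tending to $0$ in $L^q(M)$ for $q<2$, so elliptic estimates give $v_n\to v_\infty$ strongly in $H^1(M)$ with $\Delta_g v_\infty=0$, forcing $v_\infty$ constant and then $v_\infty=0$ since $v_n\rightharpoonup0$; likewise $\D_g\phi_n=\rho(e^{u_n}\psi_n-e^{u_\infty}\psi_\infty)\to0$ in $L^q(S)$, $q<4$, so $\phi_n\to\phi_\infty$ strongly in $H^{\frac12}(S)$ with $\D_g\phi_\infty=0$, and the hypothesis $0\notin\Spec(\D_g)$ yields $\phi_\infty=0$. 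Thus $(u_n,\psi_n)\to(u_\infty,\psi_\infty)$ strongly in $H^1(M)\times H^{\frac12}(S)$, and since $N$ is embedded the convergence holds within $N$ as well. The main obstacle throughout is the \emph{strong indefiniteness} of $\D$: the quadratic form $\int_M\langle\D\psi,\psi\rangle\,\dv_g$ controls neither $\psi^+$ nor $\psi^-$ by itself, so the spinor estimates must be routed through the Nehari constraint (for the negative component) and through the a priori bounds on $\int_M e^{2u_n}\,\dv_g$ and $\int_M e^{u_n}|\psi_n|^2\,\dv_g$ — and the latter in turn hinge on the delicate Jensen/Moser--Trudinger control of $\bar u_n$. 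Everything else is a routine elliptic bootstrap.
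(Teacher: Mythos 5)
Your proposal is correct and follows essentially the same route as the paper's proof: first killing the Lagrange multipliers $\varphi_n$ via the constraint, then deriving the a priori bounds by testing against $1$, $\psi_n$, and $\psi_n^{\pm}$ together with the Jensen/Moser--Trudinger control of $\bar u_n$, and finally upgrading weak to strong convergence through the equations for the differences and the assumption $0\notin\Spec(\D_g)$. No substantive differences to report.
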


\

\section{Mountain pass and linking geometry}
In this section we will show that the functional~$J_\rho|_N$, for suitable~$\rho$'s, possesses either a mountain pass or linking geometry around the trivial solution~$(0,0)$, which will yield existence of a non-trivial min-max critical point.  

For later convenience let us introduce the notation
\begin{align}
 F(u)\coloneqq\int_M \left( |\nabla u|^2-2u+e^{2u} \right) \dv_g, & & 
 Q(u,\psi)\coloneqq2\int_M \left( \left<\D\psi,\psi\right>-\rho e^{u}|\psi|^2 \right) \dv_g.
\end{align}
Then we have
\begin{enumerate}
 \item[(i)] $J_\rho(u,\psi)=F(u)+Q(u,\psi)$.
 \item[(ii)] $F(u) \geq 4\pi(\genus-1)$, and this lower bound is achieved by the unique minimizer~$u_{min}\equiv0$.  
 \item[(iii)] $Q(u,\psi)$ is quadratic in~$\psi$ and  strongly indefinite. 
\end{enumerate}

\

\subsection{Local behavior near (0,0)}

Let~$(u,\psi)\in N$ be \emph{close} to~$(0,0)$. 
The constraint that defines $N$, i.e.~$P^-(1+|\D|)^{-1}(\D\psi-\rho e^u\psi)=0$, implies
\begin{equation}
 \int_M\left<\D\psi-\rho e^u\psi,P^-\psi\right>\dv_g=0.
\end{equation}
Hence we get
\begin{align}
 -\int_M\left<\D\psi,\psi^-\right>\dv_g
 =&-\rho\int_M e^{u}\left<\psi^++\psi^-,\psi^-\right> \\
 =&-\rho\int_M e^{u}|\psi^-|^2\dv_g 
   -\rho\int_M e^{u}\left<\psi^+,\psi^-\right>\dv_g.
\end{align}
Since~$\|e^u\|_{L^p}\le C(1+\|u\|_{H^1})\le C$ for~$\|u\|$ uniformly  bounded, 
we have
\begin{equation}\label{eq:positive spinor dominates negative part}
 \|\psi^-\|\le C\rho\|\psi^+\|.
\end{equation}

Now consider the functional
\begin{align}\label{eq:local behavior of J rho}
 J_\rho(u,\psi)
 & = F(u)+Q(u,\psi)=F(u)+2\int_M \left( \left<\D\psi,\psi\right>-\rho e^u|\psi|^2 \right) \dv_g \\
 & = F(u)+2\int_M\left<(\D-\rho)\psi, \psi^+\right>\dv_g
 +2\int_M \rho (1-e^u)\left<\psi,\psi^+\right>\dv_g.
\end{align}
The last integral is now of cubic order in $(u,\psi)$, i.e.:
\begin{align}
 2\int_M \rho (1-e^u)\left<\psi,\psi^+\right>\dv_g
 \le C\|u\|_{H^1} \|\psi\|\|\psi^+\|.
\end{align}
For the first term, if we take the equivalent norm~$|\bar{u}|^2+\|\nabla \widehat{u}\|_{L^2}^2\sim \|u\|_{H^1}^2$, then for~$t^2=|\bar{u}|^2+\|\nabla \widehat{u}\|_{L^2}^2>0$
\begin{itemize}
 \item if~$|\bar{u}|^2\ge\frac{t^2}{2}\ge \|\nabla\widehat{u}\|_{L^2}^2$, then 
       \begin{equation}
        F(u)\ge \int_M \left( e^{2\bar{u}}-2\bar{u} \right) \dv_g \ge 4\pi(\genus-1)+Ct^2,
       \end{equation}
 \item if~$\|\nabla\widehat{u}\|_{L^2}^2\ge\frac{t^2}{2}\ge|\bar{u}|^2$, then 
      \begin{equation}
       F(u)\ge \int_M \left( |\nabla\widehat{u}|^2+1 \right) \dv_g\ge4\pi(\genus-1)+\frac{1}{2}t^2, 
      \end{equation}
\end{itemize}
thus in either case we have
\begin{equation}
 F(u)\ge 4\pi(\genus-1)+C^{-1}\|u\|_{H^1}^2. 
\end{equation}
It remains to analyze the middle integral term in the r.h.s. of~\eqref{eq:local behavior of J rho}. 
As before, we write~$\psi=\sum_{j\in \mathbb{Z}_*} a_j\varphi_j$: 
then
\begin{align}
 2\int_M\left<(\D-\rho)\psi, \psi^+\right>\dv_g
 =\sum_{j>0} 2(\lambda_j-\rho)a_j^2.
\end{align}

From now on we assume that~$\rho\notin\Spec(\D)$. Thus the above summation can be split into two parts
\begin{equation}
 2\int_M\left<(\D-\rho)\psi, \psi^+\right>\dv_g
 =-\sum_{0<\lambda_j<\rho} 2(\rho-\lambda_j)a_j^2
 +\sum_{\lambda_j>\rho} 2(\lambda_j-\rho)a_j^2. 
\end{equation}

\subsubsection{Mountain pass geometry}
First we consider the easier case~$0<\rho<\lambda_1$, so the first part of the above summation vanishes. 
Then, locally near~$(0,0)$ in ~$N$, we have
\begin{align}
 J_\rho(u,\psi) 
 \ge& 4\pi(\genus-1)+C^{-1}\|u\|_{H^1}^2+ C^{-1}\left(1-\frac{\rho}{\lambda_1}\right)\|\psi^+\|^2-C\|u\|_{H^1}\|\psi\|\|\psi^+\| \\
 \ge& 4\pi(\genus-1)+C^{-1}\|u\|_{H^1}^2+ C^{-1}\left(1-\frac{\rho}{\lambda_1}-C^2\|\psi\|^2\right)\|\psi^+\|^2, 
\end{align}
where we have used Cauchy-Schwarz inequality for the last   term, of cubic order. 
It follows that when
$\|u\|_{H^1}^2+\|\psi\|_{H^{\frac{1}{2}}}^2=r^2>0$ is small, there exists a continuous function~$\theta(r)>0$ such that
\begin{equation}\label{eq:l-bd}
 J_\rho(u,\psi)\ge J(0,0)+\theta(r). 
\end{equation}
On the other hand, we can choose a large constant~$\bar{u}_1\in H^1(M)$ such that~$\rho e^{\bar{u}_1}>\lambda_1+1$ and then take~$s>0$ large such that
\begin{align}
 J(\bar{u}_1, s\varphi_1) 
 =&vol(M,g)(e^{2\bar{u}_1}-2\bar{u}_1) 
  +2(\lambda_1-\rho e^{\bar{u}_1})s^2 \\
 =&4\pi(\genus-1)(e^{2\bar{u}_1}-2\bar{u}_1)
  -2(\rho e^{\bar{u}_1}-\lambda_1)s^2
\end{align}
is negative. 
Thus we have the mountain pass geometry locally near ~$(0,0)$ in the Nehari manifold~$N$. 
Let~$\Gamma$ be the space of paths connecting~$(0,0)$ and~$(\bar{u}_1, s\varphi_1)$ inside~$N$ (notice that 
$\Gamma \neq \emptyset$ since $N$ is contractible, and hence connected), parametrized by~$t\in [0,1]$, and define
\begin{equation}
 c_1\coloneqq \inf_{\alpha\in\Gamma} \sup_{t\in [0,1]} J_\rho(\alpha(t)).
\end{equation}
From the above arguments we have that~$c_1>4\pi(\genus-1)$.
It follows that~$c_1$ is a critical value for~$J_\rho$ with a critical point at this level, which is different from the trivial one. 
This concludes the proof of Theorem~\ref{thm} in this case. 

\medskip

\subsubsection{Linking geometry}
Next we consider the case~$\rho\in(\lambda_k,\lambda_{k+1})$ for some~$k\ge 1$. 
Now there are more directions in which~$J_\rho$ becomes negative, but these are at most finitely-many, and we will apply a linking method to 
exploit the geometry of the functional. 

 Decomposing first the space~$H^{\frac{1}{2}}(S)$ into two parts: 
\begin{align}
 H^{\frac{1}{2},k+}\coloneqq &
 \left\{\phi_1\in H^{\frac{1}{2}}(S)\mid \phi_1=\sum_{j>k}a_j\varphi_j\right\}, \\
 H^{\frac{1}{2},k-}\coloneqq &
 \left\{\phi_2\in H^{\frac{1}{2}}(S)\mid \phi_2=\sum_{j\le k}a_j\varphi_j\right\},
\end{align}
we have then the orthogonal decomposition 
\begin{equation}
 H^{\frac{1}{2}}(S)=H^{\frac{1}{2},k+}\oplus 
 \left(H^{\frac{1}{2},k-}\cap H^{\frac{1}{2},+}(S)\right)
 \oplus H^{\frac{1}{2},-}(S). 
\end{equation}

Now consider the set
\begin{equation}
 \mathscr{N}_k\coloneqq
 \{0\}\times \left(H^{\frac{1}{2},k-}\cap H^{\frac{1}{2},+}(S)\right)\subset H^1(M)\times H^{\frac{1}{2}}(S).
\end{equation}
It is easy to see that~$\mathscr{N}_k$ is a linear subspace inside~$N$, and along this subspace the functional~$J_\rho$ is not larger than the minimal critical value:
\begin{equation}
 J_\rho(0,\phi_1)
 =4\pi(\genus-1)-2\sum_{0<j\le k} (\rho-\lambda_j) a_j^2 
 \le 4\pi(\genus -1).
\end{equation}
For $\tau > 0$ let us consider the following cone around~$\mathscr{N}_k$:
\begin{multline}
 \mathcal{C}_\tau(\mathscr{N}_k)\coloneqq 
 \Big\{(u,\psi)\in N
 \mid u\in H^1(M), \psi=\phi_1+\phi_2+\psi^-\in H^{\frac{1}{2},k+}\oplus (H^{\frac{1}{2},k-}\cap H^{\frac{1}{2},+}(S))\oplus H^{\frac{1}{2},-}(S), \\
 \|u\|_{H^1}+\|\phi_1\|^2+\|\psi^-\|^2< \tau \|\phi_2\|^2 \Big\}. 
\end{multline}  
We claim that for~$\tau$ suitably chosen this cone contains all the decreasing directions, in the sense that outside the cone we can find a region on which the functional is strictly above the ground state level.

Letting~$(u,\psi)\in N\setminus \mathcal{C}_\tau(\mathscr{N}_k)$, i.e., with~$\psi=\phi_1+\phi_2$ decomposed as above, we have
\begin{equation}
 \|u\|^2_{H^1}+\|\phi_1\|^2+\|\psi^-\|^2\ge \tau \|\phi_2\|^2.
\end{equation}
By~\eqref{eq:positive spinor dominates negative part}, which can be now interpreted as
\begin{equation}
 \|\psi^-\|^2\le C\rho^2(\|\phi_1\|^2+\|\phi_2\|^2), 
\end{equation}
we see that 
\begin{equation}\label{eq:lower part of spinor is dominated by higher part}
 \|\phi_2\|^2\le \frac{1}{\tau-C\rho^2}\left(\|u\|^2_{H^1}+(1+C\rho^2)\|\phi_1\|^2\right).
\end{equation}
Moreover, this also implies that 
\begin{equation}
 \|u\|_{H^1}^2+ \|\phi_1\|^2 \ge C(\|u\|^2 +\|\psi\|^2),
\end{equation}
for some~$C=C(\rho,\tau)>0$.

Then in~\eqref{eq:local behavior of J rho} for the scalar component we have as before the control
\begin{equation}
 F(u)\ge 4\pi(\genus-1)+C\|u\|_{H^1}^2.
\end{equation}
For the spinorial part, since~$\psi=\phi_1+\phi_2+\psi^-$ is an orthogonal decomposition, we have
\begin{align}
 Q(u,\psi)
 =&2\int_M\left<(\D-\rho)\psi,\psi^+\right>\dv_g
   +2\rho\int_M (1-e^u)\left<\psi,\psi^+\right>\dv_g \\
 =&2\int_M\left<(\D-\rho)\phi_1,\phi_1\right>\dv_g
 +2\int_M\left<(\D-\rho)\phi_2,\phi_2\right>\dv_g \\
 &+2\rho\int_M (1-e^u)\left<\psi,\phi_1+\phi_2\right>\dv_g \\
 \ge&C\left(1-\frac{\rho}{\lambda_{k+1}}\right)\|\phi_1\|^2
     -C\left(\frac{\rho}{\lambda_k}-1\right)\|\phi_2\|^2 
     -C\rho\|u\|_{H^1}\|\psi\|(\|\phi_1\|+\|\phi_2\|).
\end{align}
Assuming~$\|u\|^2_{H^1}+\|\psi\|^2=r^2$ is small and noting~\eqref{eq:lower part of spinor is dominated by higher part}, we get
\begin{align}
 J_\rho(u,\phi_1+\phi_2)
 \ge& 4\pi(\genus-1)+C\|u\|_{H^1}^2
 + C\left(1-\frac{\rho}{\lambda_{k+1}}-Cr^2\right)\|\phi_1\|^2\\
 & -C\left(\frac{\rho}{\lambda_k}-1-Cr^2\right)\|\phi_2\|^2 \\
 \ge&4\pi(\genus-1)+C\|u\|_{H^1}^2
 + C\left(1-\frac{\rho}{\lambda_{k+1}}-Cr^2\right)\|\phi_1\|^2\\
 & -C\left(\frac{\rho}{\lambda_k}-1\right)\frac{\|u\|_{H^1}^2+(1+C\rho^2)\|\phi_1\|^2}{\tau-C\rho^2}.
\end{align}
We can first choose~$r$ small enough and then choose~$\tau$ large enough such that
\begin{align}
 J_\rho(u,\phi_1+\phi_2)
 \ge& 4\pi(\genus-1)+ C(\|u\|^2+\|\phi_1\|^2) \\
 \ge& 4\pi(\genus-1)+C r^2
\end{align}
outside the cone~$\mathcal{C}(\mathscr{N}_k)$. 
Thus the claim is confirmed. 

\medskip

For~$r$ as above, consider the set
\begin{equation}
 L_1\coloneqq\bigr( \p B_r(0,0)\backslash \mathcal{C}(\mathscr{N}_k)\bigr) \cap N, 
\end{equation}
which is non-empty since~$(0,r\varphi_{k+1})\in L_1$.
Recall that~$N$ is locally modeled by a Hilbert space, e.g. $T_{(0,0)} N$. 
We can assume that in a local chart,~$\mathscr{N}_k$ is some {\em coordinate subspace}, while~$L_1$ is homeomorphic to a collar neighborhood of the sphere (of infinite dimension) which lies in a subspace complementary to~$\mathscr{N}_k$ and intersects $\mathcal{C}_\tau(\mathscr{N}_k)$ only at $\{(0,0)\}$. 

Next we introduce a set~$L_2$ on which the functional attains low values and such that it links with~$L_1$, see Figure~\ref{fig:linking}. 
The construction of such a set is performed in several steps. First we take the ball
\begin{equation}
 B_R^{0,k}(0)\coloneqq\Bigr\{(0,\phi_2)\in\mathscr{N}_k \mid \|\phi_2\|\le R\Bigr\}\subset \mathscr{N}_k
\end{equation}
with~$R>0$ a large constant to be fixed later. 
Note that for any~$(0,\phi_2)$,~$J_\rho(0,\phi_2)\le 4\pi(\genus-1)$ and for~$(0,\phi_2)\in \p B_R^{0,k}(0)$, 
\begin{equation}
 J_\rho(0,\phi_2)
 \le 4\pi(\genus-1)-C\left(\frac{\rho}{\lambda_k}-1\right)\|\phi_2\|^2
 \le 4\pi(\genus-1)-C\left(\frac{\rho}{\lambda_k}-1\right)R^2.
\end{equation}
For any~$(0,\phi_2)\in\p B_R^{0,k}(0)$, we consider the following curves. 
First let
\begin{equation}
 \sigma_{1}\colon [0,T]\to N, \quad
 \sigma_1(t)\coloneqq(t, \phi_2+At\varphi_{k+1}),
\end{equation}
where~$A>0$ is again a constant to be fixed later.
One easily sees that this is a curve in~$N$ and 
\begin{align}
 J_\rho(t,\phi_2+&At\varphi_{k+1})
 =vol(M,g)(e^{2t}-2t)+2\int_M\left<(\D-\rho e^t)\phi_2,\phi_2\right>\dv_g\\
  &\qquad\qquad+2A^2t^2\int_M\left<(\D-\rho e^t)\varphi_{k+1},\varphi_{k+1}\right>\dv_g\\
 \le&4\pi(\genus-1)(e^{2t}-2t)
 +2\int_M\left<(\D-\rho)\phi_2,\phi_2\right>\dv_g 
 +2A^2t^2 (\lambda_{k+1}-\rho e^t) \\
 \le&4\pi(\genus-1)(e^{2t}-2t)
 -C\left(\frac{\rho}{\lambda_k}-1\right)R^2
 +2A^2t^2 (\lambda_{k+1}-\rho e^t). 
\end{align}
Now we fix some constants:
\begin{itemize}
 \item we choose~$T>0$ such that~$\rho e^T-\lambda_{k+1}\ge 1$;
 \item then we choose~$A>0$ such that
       \begin{equation}
        4\pi(\genus-1)(e^{2T}-2T)-2A^2T^2(\rho e^T-\lambda_{k+1})<4\pi(\genus-1); 
       \end{equation}
  \item finally, choose~$R>0$ such that for any~$t\in [0,T]$
        \begin{equation}
         4\pi(\genus-1)(e^{2t}-2t)
 -C\left(\frac{\rho}{\lambda_k}-1\right)R^2
 +2A^2t^2 (\lambda_{k+1}-\rho e^t)<4\pi(\genus-1).
        \end{equation}
\end{itemize}
Then we consider the curve
\begin{align}
 \sigma_2\colon [-1,1]\to N, \quad
 \sigma_2(r)\coloneqq \bigr(T,(-r)\phi_2+AT\varphi_{k+1}\bigr),
\end{align} 
which joins~$(T,\phi_2+AT\varphi_{k+1})$ to~$(T,-\phi_2+AT\varphi_{k+1})$ inside~$N$. 
Thereafter we can come back to~$\mathscr{N}_k$ via the curve
\begin{equation}
 \sigma_3\colon [0,T]\to N, \quad 
 \sigma_3(t)\coloneqq \bigr((T-t), \phi_2+A(T-t)\varphi_{k+1}\bigr).  
\end{equation}
Finally, consider the subset
\begin{equation}
 \mathcal{D}\coloneqq
 \left\{(t,At\varphi_{k+1}+\phi_2) \mid t\in [0,T], (0, \phi_2)\in B_R^{0,k}(0) \right\}, 
\end{equation} 
which is compact and  homeomorphic to a finite-dimensional cylindrical segment 
\begin{equation}
 [0,T]\times B_R^{0,k}(0).
\end{equation}
Note that~$\mathcal{D}\subset N$ and
let~$L_2=\p \mathcal{D}$, see Figure~\ref{fig:linking}.    
The curves $\sigma_1, \sigma_2, \sigma_3$ constructed above pass through every point of~$L_2\backslash B_R^{0,k}(0)$. It follows that on~$L_2$ the functional attains low values.  
One can shrink~$L_2$ (in an homotopically equivalent way) into the coordinate chart to see that~$L_1$ and~$L_2$ actually link,  see e.g. \cite{ambrosetti2007nonlinear, struwe2008variational} for a rigorous definition of this concept. 

\begin{figure}[h]  
\centering
\includegraphics[width=0.5\linewidth]{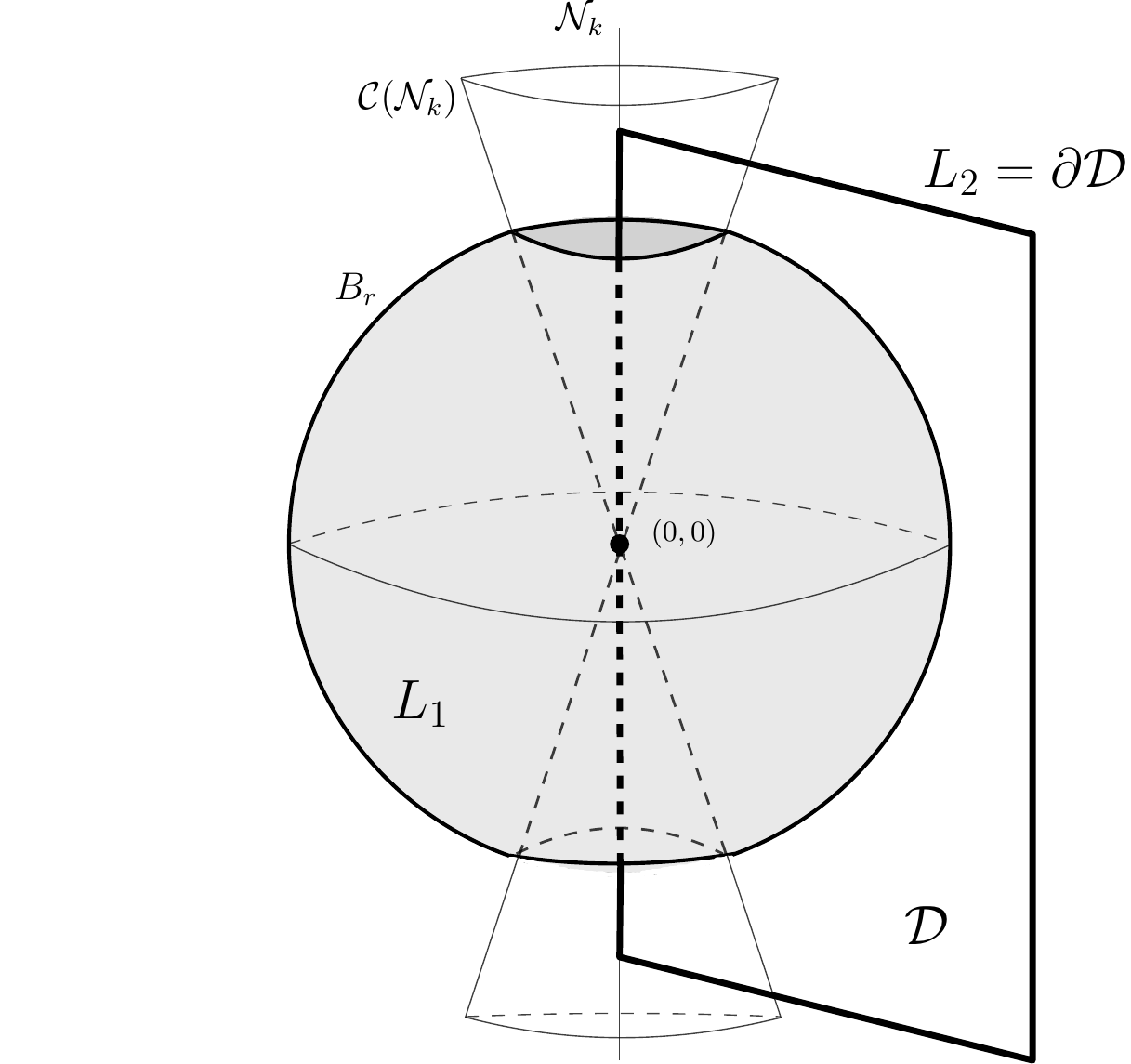}
\caption{}
\label{fig:linking}
\end{figure}

\medskip

Now we define the linking level.
Let~$\Gamma$ be the space of continuous maps
\begin{equation}
 \alpha\colon \mathcal{D}\to N,  
\end{equation}
such that~$\alpha(v,h)=(v,h)$ for any~$(v,h)\in L_2=\p \mathcal{D}$. 
This set~$\Gamma$ is clearly non-empty since~$\Id_\mathcal{D}\in\Gamma$. 
Then we define the linking level
\begin{equation}
 c_1\coloneqq \inf_{\alpha\in\Gamma}\max_{(v,h)\in \mathcal{D}}J_{\rho}(\alpha(v,h)) .
\end{equation}
As~$L_1$ and~$L_2$ link, we have from the above arguments that
\begin{equation}
 c_1 \ge 4\pi(\genus-1)+\theta(r). 
\end{equation}
It follows that~$c_1$ is a critical value for~$J_\rho$, and again we obtain a critical point for~$J_\rho $ which is different from the trivial one.        
This concludes the proof of Theorem~\ref{thm} in this case as well.
  
\

\end{document}